\newtheorem{thm}{Theorem}[section]
\newcommand{\n}{\noindent}
\newtheorem{lemma}[thm]{Lemma}
\newtheorem{cor}[thm]{Corollary}
\newenvironment{proof}{{\bf Proof}.}{\rule{3mm}{3mm}}
\title{Every planar graph without adjacent cycles of length at most $8$ is $3$-choosable}
\author{Runrun Liu  and  Xiangwen Li\thanks{Supported in part by  the NSFC (11571134)}~\thanks{Corresponding author. Email address: xwli68@mail.ccnu.edu.cn.}
\\
\small School of Mathematics $\&$ Statistics\\
\small Central China Normal University, Wuhan 430079, China
}
\date{}
\begin{document}

\maketitle

\begin{abstract}
DP-coloring as a generalization of list coloring was introduced by Dvo\v{r}\'{a}k and Postle in 2017, who proved that every planar graph without cycles from 4 to 8 is 3-choosable, which was conjectured by Borodin {\it et al.} in 2007. In this paper, we prove that every planar graph without adjacent cycles of length at most $8$ is $3$-choosable, which extends this result of Dvo\v{r}\'{a}k and Postle.
\end{abstract}

\section{Introduction}

Coloring is one of the main topics in graph theory.  A {\em proper $k$-coloring} of a graph $G$ is a mapping $f: V(G)\to [k]$ such that $f(u)\ne f(v)$ whenever $uv\in E(G)$, where $[k]=\{1,2,\ldots, k\}$. The smallest $k$ such that $G$ has a $k$-coloring is called the {\em chromatic number} of $G$ and is denoted by $\chi(G)$. List coloring was introduced by Vizing \cite{V76},  and independently Erd\H{o}s, Rubin, and Taylor \cite{ERT79}. A {\em list assignment} of a graph $G=(V,E)$ is a function $L$ that assigns to each vertex $v\in V$ a list $L(v)$ of colors. An {\em $L$-coloring} of $G$ is a function $\lambda:V\to\cup_{v\in V}L(v)$ such that $\lambda(v)\in L(v)$ for every $v\in V$ and $\lambda(u)\ne\lambda(v)$ whenever $uv\in E$. A graph $G$ is {\em $k$-choosable} if $G$ has an $L$-coloring for every assignment $L$ with $|L(v)|\ge k$ for each $v\in V(G)$. The {\em choice number}, denoted by $\chi_l(G)$, is the minimum $k$ such that $G$ is $k$-choosable.

The techniques to approach the list problems are less than those used in ordinary coloring. For ordinary coloring, identifications of vertices are involved in the reduction configurations. In list coloring, since  different vertices have different lists, it is no possible for one to use identification of vertices. With this motivation, Dvo\v{r}\'ak and Postle \cite{DP17} introduced correspondence coloring (or DP-coloring) as a generalization of list-coloring.

 A {\em $k$-correspondence assignment} for $G$ consists of a list assignment $L$ on vertices in $V(G)$  and a function $C$ that assigns every edge $e=uv\in E(G)$ a matching $C_e$ between $\{u\}\times L(u)$ and $\{v\}\times L(v)$.

A {\em $C$-coloring} of $G$ is a function $\phi$ that assigns each vertex $v\in V(G)$ a color $\phi(v)\in L(v)$, such that for every $e=uv\in E(G)$, the vertices $(u,\phi(u))$ and $(v,\phi(v))$ are not adjacent in $C_e$. We say that $G$ is {\em $C$-colorable} if such a $C$-coloring exists.

The {\em correspondence chromatic number} $\chi_{DP}(G)$ of $G$ is the smallest integer $k$ such that $G$ is $C$-colorable for every $k$-correspondence assignment $(L,C)$.

Let $(L,C)$ be a $k$-correspondence assignment for a graph $G$, and let $W=v_1v_2\ldots v_m$ with $v_m=v_1$ be a closed walk of length $m$ in $G$. We say that the assignment $(L,C)$ is {\em inconsistent} on $W$ if there exist $c_i\in L(v_i)$ for $i\in [m]$ such that $(v_i,c_i)(v_{i+1},c_{i+1})$ is an edge of $C_{v_iv_{i+i}}$ for $i\in [m-1]$, and $c_1\ne c_m$. The $(L,C)$ is {\em consistent} if $(L,C)$ is inconsistent on none of the closed walks in $G$.

Recently, DP-coloring  is studied widely. On the one hand, some results of list coloring were generalized to DP-coloring. For this literature, the readers can see   \cite{KO17,KY17,LLNSY18,LLYY18}.  On the other hand,  some new results on DP-coloring are obtained and can be found in \cite{B16,BK17,BKZ17}. We  here pay more attention to  the result of Dvo\v{r}\'ak and Postle \cite{DP17}, who solved a conjecture of  Borodin  (2007) by the technique developed in DP-coloring, as follows.

\begin{thm}[\cite{DP17}]
\label{main0}
Every planar graph $G$ without cycles of lengths $4$ to $8$ is 3-choosable.
\end{thm}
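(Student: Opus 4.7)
The plan is to establish the stronger DP-coloring analogue: every planar graph without adjacent cycles of length at most $8$ has DP-chromatic number at most $3$. This implies the list-coloring statement since the DP-chromatic number dominates the choice number. I would argue by contradiction using a minimum counterexample $G$ (minimizing $|V(G)|+|E(G)|$) together with a $3$-correspondence assignment $(L,C)$ that admits no $C$-coloring. Standard minimality reductions show that $G$ is $2$-connected and has minimum degree at least $3$, since any vertex of degree at most $2$ leaves enough correspondence-freedom to extend a coloring of $G-v$.

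The next step is to identify a catalogue of reducible configurations that cannot appear in $G$. These typically involve $3$-vertices with specific neighborhoods, chains of $3$-vertices on short faces, and short faces sharing too many low-degree vertices on the boundary. In the DP-setting one must be careful: the matchings on edges are arbitrary, so a reducibility argument has to work regardless of how the correspondence is wired, which usually forces the analysis to keep track of matching-theoretic alternatives rather than the colorings themselves on closed walks.

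The heart of the proof is a discharging argument. Initial charges are the usual Euler contributions $d(v)-4$ for vertices and $\ell(f)-4$ for faces, summing to $-8$. The hypothesis that no two cycles of length at most $8$ share an edge translates, in the plane, into the statement that every face of length at most $8$ is surrounded entirely by faces of length at least $9$. This makes $9^+$-faces the natural donors: each sends prescribed amounts across its incident edges and to poorly-behaved boundary $3$-vertices, while short faces and low-degree vertices absorb what they need to reach charge $0$. The discharging rules must be tuned so that, combined with the list of reducible configurations, the final charge of every vertex and every face is non-negative, contradicting the negative total.

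The main obstacle I anticipate lies in the detailed case analysis around $5$-, $6$-, $7$-, and $8$-faces that carry many $3$-vertices on their boundaries. A single $9^+$-face may be adjacent to several small faces, and balancing its outgoing charges against its surplus $\ell(f)-4\ge 5$ requires both a tight bound on how many small neighbors it has and a careful reducibility lemma preventing dense clusters of $3$-vertices on the large face's boundary. Translating the local forbidden-configuration hypothesis into a global quantitative discharging scheme is where the bulk of the technical effort will be concentrated.
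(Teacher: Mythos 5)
Your overall route---deduce the statement from the stronger ``no two adjacent cycles of length at most $8$'' theorem via a minimal counterexample, reducible configurations, and discharging---is exactly the paper's framing, but two of your foundational choices would break the argument. First, you propose to prove that the DP-chromatic number is at most $3$ for \emph{arbitrary} $3$-correspondence assignments. The method cannot deliver that, and it is not what is needed: the key reducibility arguments (the tetrad and its relatives) work by making the edges around the configuration \emph{straight} via Lemma~\ref{straight}, which requires the assignment to be consistent on the triangles those edges lie in. For arbitrary matchings this fails (already a single cycle has DP-chromatic number $3$, unlike choosability), which is why the paper proves $C$-colorability only for assignments consistent on every closed walk of length $3$ (Theorem~\ref{main1}) and then invokes the Dvo\v{r}\'ak--Postle equivalence (Theorem~\ref{main2}) to get choosability. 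Relatedly, your minimality conditions omit the third one the paper needs---maximizing $\sum_{uv}|E(C_{uv})|$---which is what forces edges off triangles to be full (Lemma~\ref{matching}) and makes the straightening step available at all.

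Second, inducting only on $|V(G)|+|E(G)|$ with no precolored boundary is not enough. The tetrad reduction identifies two vertices, and one must rule out a short path between them (which would create a forbidden short or separating cycle in the reduced graph). The paper excludes this through Lemma~\ref{counter}(d) (no separating $k$-cycle for $3\le k\le 12$), and that lemma is only provable because the induction hypothesis is the stronger precoloring-extension statement of Theorem~\ref{main3}: any $C$-coloring of a face of length at most $12$ extends to $G$ (color the exterior of a separating cycle first, then the interior). The same strengthening is what closes the discharging: with charges $d-4$ and $\ell-4$ summing to $-8$ there is no designated sink, whereas the paper gives the outer face charge $d(D)+4$ and shows every element ends nonnegative while $D$ ends strictly positive. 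Without the boundary-extension version of the theorem, both the reducibility step and the final counting step of your plan have genuine gaps.
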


The well-known Four Color Theorem states that all planar graphs
can be colored with four colors such that no adjacent vertices share a common color.   There was much effort to find sufficient conditions for a graph to be 3-colorable.
The classic theorem by Gr\"{o}tzch(\cite{G59}, 1959) shows that planar graphs without 3-cycles are 3-colorable.

In 1969, Havel (\cite{H69}) posed a problem: Does there exists a constant $C$ such that every planar graph with the minimum distance between triangles at least $C$ is 3-colorable? Recently, Dvo\v{r}\'{a}k, Kr\'{a}l' and Thomas~\cite{DKT16} proved that the distance is at least $10^{100}$.
In 1976,  Steinberg (\cite{S76}) conjectured that every  planar graph without 4-cycles and 5-cycles is 3-colorable.
 Erd\H{o}s relaxed Steinberg's conjecture and suggested to determine the smallest integer $k$, if it exists, such that every planar graph without cycles of length from $4$ to $k$ is 3-colorable. The best bound for such $k$ is 7, and it is proved by Borodin, Glebov, Raspaud, and Salavatipour~\cite{BGRS05}. In 2003,
 Strong Bordeaux Conjecture (\cite{BR03}) by Borodin and Raspaud was posed as follows: Every planar graph without 5-cycle and adjacent triangles is 3-colorable.

Recently,  Cohen-Addad, Hebdige, Kral, Li and Salgado~\cite{CHKLS17} presented a counterexample to the Steinberg's Conjecture, as well as to the Strong Bordeaux Conjecture.
Borodin, Montassier and Raspaud \cite{BMR10} asked to determine  the smallest integer $k$ such that every planar graph without adjacent  cycles of length at most $k$ is 3-colorable and proved the following result.

\begin{thm}\label{main00}
Every planar graph without adjacent  cycles of length at most 7 is 3-colorable.
\end{thm}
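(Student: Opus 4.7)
The plan is to proceed by contradiction: let $G$ be a counterexample minimizing $|V(G)|+|E(G)|$, embedded in the plane. The first stage is to extract a list of reducible configurations that cannot appear in $G$. Since we are proving $3$-colorability (not $3$-choosability), I can use the full toolkit of identifications and Kempe chains: any proper $3$-coloring of a smaller graph obtained by deleting or contracting a small substructure $H$ should extend back to $G$, contradicting minimality. Standard reductions give $\delta(G)\ge 3$, that $G$ is $2$-connected, and that low-degree vertices cannot cluster; more delicate identifications, exploiting the hypothesis that no two cycles of length $\le 7$ share an edge, would rule out configurations such as a $3$-vertex on a triangle, a triangle whose three vertices all have degree $3$, and certain $3$-face/$4$-face/etc.\ patterns.

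With these structural restrictions in hand, the second stage is discharging. By Euler's formula $|V|-|E|+|F|=2$, the assignment $\mu(v)=d(v)-4$ for $v\in V(G)$ and $\mu(f)=d(f)-4$ for $f\in F(G)$ gives total charge $-8$. I would then design redistribution rules so that every vertex and face ends with non-negative charge, contradicting $-8<0$. The decisive structural input is: under the hypothesis, every face of length at most $7$ is bordered \emph{only} by faces of length at least $8$, because any two faces sharing an edge determine two adjacent cycles. Thus each ``small'' face of deficit charge is completely surrounded by $8^+$-faces carrying surplus $d(f)-4\ge 4$.

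A natural set of rules is then: (R1) every face $f$ with $d(f)\ge 8$ donates a fixed amount (around $\tfrac12$ to $1$, to be tuned) across each incident edge to the neighboring small face; (R2) every face $f$ with $d(f)\ge 8$ sends an additional share to each incident $3$-vertex; (R3) every vertex $v$ with $d(v)\ge 5$ distributes its surplus $d(v)-4$ to incident $3$-faces. The verification amounts to showing that each triangle, $4$-face, \dots, $7$-face collects enough from its $8^+$-neighbors and from the high-degree vertices on its boundary to offset its initial deficit, and that each $3$-vertex is rescued similarly.

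The main obstacle, as in all such discharging proofs, is the triangle case: a $3$-face has the largest deficit ($-1$) and can also host up to three $3$-vertices that themselves need charge, while contributing nothing of its own. Closing this deficit requires the adjacency-freeness hypothesis in full strength (so that all three bordering faces are $8^+$-faces, each sending a generous share) combined with enough reducible configurations to bound the number of $3$-vertices on the triangle and to control how often a single $8^+$-face must serve many needy neighbors. I expect the bulk of the work to be an interleaved refinement: whenever the discharging falls short in a local configuration, add that configuration to the reducibility list and re-derive until the numerics close.
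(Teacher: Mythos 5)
First, a point of comparison that matters for your review: the paper does not prove this statement at all. Theorem~\ref{main00} is quoted as a known result of Borodin, Montassier and Raspaud \cite{BMR10}; the paper's own work goes into the stronger choosability statement (Theorem~\ref{main10}) via DP-coloring. So there is no in-paper proof to measure you against, only the analogous (and fully worked-out) argument for the ``at most $8$'' case in Section~3, which does follow the same template you describe: a minimal counterexample, a list of reducible configurations, and discharging with $\mu(v)=d(v)-4$, $\mu(f)=d(f)-4$.

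That said, your proposal is a strategy outline rather than a proof, and the parts you defer are exactly the parts that constitute the theorem. Concretely: (i) none of your reducibility claims is established, and at least one is doubtful as stated --- a $3$-vertex $v$ on a triangle is not reducible by deletion alone (its three neighbours can exhaust all three colours), and the identification you would use instead must be shown to preserve planarity, to avoid creating an edge inside an identified pair, and crucially to preserve the hypothesis that no two $\le 7$-cycles are adjacent; this forces exactly the kind of separating-cycle and distance lemmas that appear as Lemma~\ref{counter}(d) and Lemma~\ref{tetrad} in the $\le 8$ case, and none of that analysis is present. (ii) Your discharging rules are left unquantified (``around $\tfrac12$ to $1$, to be tuned'') and the verification that every vertex and face ends nonnegative is replaced by ``re-derive until the numerics close.'' The triangle and $3$-vertex deficits, which you correctly identify as the crux, are therefore not closed. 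The observation that every $\le 7$-face is surrounded by $8^+$-faces is correct (given $2$-connectivity, so that face boundaries are cycles), and it is the right starting point, but as written the argument cannot be checked and does not yet prove the statement.
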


Motivated by Theorems~\ref{main0} and \ref{main00},  we present the following result in this paper.

\begin{thm}\label{main10}
Every planar graph without adjacent cycles of length at most $8$ is $3$-choosable.
\end{thm}

In the end of this section, we introduce some notations used in the paper. Graphs mentioned in this paper are all simple. For a cycle $K$ of a plane graph $G$, we use $int(K)$ and $ext(K)$ to denote the sets of vertices located inside and outside $K$, respectively. The cycle $K$ is called a {\em separating cycle} if $int(K)\ne\emptyset\ne ext(K)$. Let $V$ and $F$ be the set of vertices and faces of $G$, respectively. For a face $f\in F$, if the vertices on $f$ in a cyclic order are $v_1, v_2, \ldots, v_k$, then we write $f=[v_1v_2\ldots v_k]$. Let $b(f)$ be the vertex set of $f$. A $k$-vertex ($k^+$-vertex, $k^-$-vertex) is a vertex of degree $k$ (at least $k$, at most $k$).  A $k$-face ($k^+$-face, $k^-$-face) is a face whose boundary walk has length of $k$ (at least $k$, at most $k$). The same notation will be applied to walks and cycles.

\section{Lemmas}\label{prelim}

The $C$-coloring was recently introduced by  Dvo\v{r}\'{a}k and Postle \cite{DP17}, who proved
the following nice relationship between choosability and correspondence coloring.

\begin{thm}[\cite{DP17}]\label{main2}
A graph $G$ is $k$-choosable if and only if $G$ is $C$-colorable for every consistent $k$-correspondence assignment $C$.
\end{thm}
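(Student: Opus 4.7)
The statement is an equivalence, so the plan splits into two directions, and the substantive work lies in the forward direction ($k$-choosable $\Rightarrow$ $C$-colorable for consistent assignments). The guiding idea is that a consistent correspondence assignment can be ``flattened'' into an ordinary list assignment by quotienting out the matchings, while an ordinary list assignment can be ``lifted'' into a consistent correspondence assignment by re-indexing lists as copies of $[k]$.

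For the backward direction, I would suppose we are given a list assignment $L$ on $G$ with $|L(v)|\ge k$ for each $v$. Throwing away extra colors, assume $|L(v)|=k$, and fix any bijection $\sigma_v:[k]\to L(v)$ for every $v\in V(G)$. Define a $k$-correspondence assignment $(L',C)$ by $L'(v)=[k]$ and, for each edge $uv\in E(G)$, setting $(u,i)(v,j)\in C_{uv}$ if and only if $\sigma_u(i)=\sigma_v(j)$. This is a matching because each common color of $L(u)$ and $L(v)$ contributes exactly one pair. For any closed walk $v_0v_1\cdots v_m=v_0$ with a sequence of colors $c_0,c_1,\ldots,c_m$ satisfying the matching constraint, $\sigma_{v_i}(c_i)$ is the same element of $\bigcup_v L(v)$ for all $i$; in particular $\sigma_{v_0}(c_0)=\sigma_{v_0}(c_m)$, so $c_0=c_m$ and $(L',C)$ is consistent. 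Applying the hypothesis to $(L',C)$ and pushing any $C$-coloring $\phi$ forward via $\lambda(v):=\sigma_v(\phi(v))$ yields an $L$-coloring, since adjacent vertices receiving the same image of $\sigma$ would be matched in $C_{uv}$.

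For the forward direction, the key construction is an equivalence relation on the disjoint union $X=\bigsqcup_{v\in V(G)}\{v\}\times[k]$ that records when two color-vertex pairs are ``forced equal'' by the matchings. Starting from a consistent $(L,C)$ and assuming without loss of generality that $G$ is connected, declare $(u,c)\sim(v,c')$ if there is a walk $u=w_0,w_1,\ldots,w_\ell=v$ together with colors $c=d_0,d_1,\ldots,d_\ell=c'$ such that $(w_{i-1},d_{i-1})(w_i,d_i)\in C_{w_{i-1}w_i}$ for every $i$. Consistency is exactly what is needed to show this relation is well-defined and transitive: if two such walks reach $v$ from $(u,c)$ with different endpoints $c',c''$, concatenating one with the reverse of the other gives a closed walk from $c'$ to $c''$, so consistency forces $c'=c''$. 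Applied with $u=v$, consistency also guarantees that within a fixed vertex $v$ the $k$ colors lie in $k$ \emph{distinct} equivalence classes.

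Now define a list assignment $L'$ on $G$ by letting $L'(v)$ be the set of equivalence classes meeting $\{v\}\times[k]$; by the previous observation $|L'(v)|=k$. Since $G$ is $k$-choosable, pick an $L'$-coloring $\lambda$, and for each $v$ let $\phi(v)$ be the unique color in $[k]$ with $(v,\phi(v))$ representing the class $\lambda(v)$. To check this is a $C$-coloring, suppose $(u,\phi(u))(v,\phi(v))\in C_{uv}$ for some edge $uv$; then the length-one walk $u,v$ witnesses $(u,\phi(u))\sim(v,\phi(v))$, so $\lambda(u)=\lambda(v)$, contradicting the properness of $\lambda$. The main obstacle in the argument is verifying well-definedness of $\sim$ and, in particular, that a single vertex cannot host two equal equivalence classes; both reduce to invoking consistency on an appropriate closed walk obtained by gluing witnessing walks (and their reverses via the inverse matching), so the bookkeeping of reversing a matched color sequence is the only delicate point.
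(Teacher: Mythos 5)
The paper gives no proof of Theorem~\ref{main2}; it is imported verbatim from Dvo\v{r}\'ak and Postle \cite{DP17}, so there is nothing internal to compare against. Your argument is correct and reconstructs essentially the standard proof from that source: the backward direction lifts a trimmed list assignment to a consistent correspondence assignment via the bijections $\sigma_v$ (with consistency following because the actual color $\sigma_{v_i}(c_i)$ is invariant along any matched walk), and the forward direction uses consistency on closed walks to show that the walk-generated equivalence classes on $\bigsqcup_{v}\{v\}\times[k]$ meet each fiber $\{v\}\times[k]$ in exactly $k$ distinct singletons, producing a list assignment of size $k$ whose proper colorings pull back to $C$-colorings.
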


Utilizing Theorem~\ref{main2}, the technique used by  Dvo\v{r}\'{a}k and Postle for solving Borodin's conjecture is $C$-coloring.
 We follow this Dvo\v{r}\'ak and Postle's idea and prove the following result.

\begin{thm}
\label{main1}
Every planar graph $G$ without adjacent cycles of length at most $8$ is $C$-colorable for every $3$-correspondence assignment $C$ that is consistent on every closed walk of length $3$ in $G$.
\end{thm}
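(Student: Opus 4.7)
The plan is to argue by contradiction via a minimum counterexample. Assume $G$ is a planar graph without adjacent cycles of length at most $8$ that has the smallest number of vertices (then edges) among graphs admitting a $3$-correspondence assignment $C$, consistent on every closed walk of length $3$, with no $C$-coloring. Minimality says that for any proper subgraph $H$ of $G$ that still lies in our class, we can restrict $(L,C)$ to $H$, obtain a $C$-coloring of $H$, and then try to extend it to $G$. The consistency condition on $3$-walks is what allows us to, after a relabeling of lists, treat short matchings along a triangle as the identity, so the standard extension moves from list-coloring proofs go through almost verbatim.

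The first block of work is to establish a list of reducible configurations that cannot occur in $G$. I would aim to prove: (i)~$G$ is $2$-connected with minimum degree at least $3$; (ii)~$G$ has no separating cycle of length at most $8$, so in particular every short cycle bounds a face; (iii)~structural restrictions around $3$-vertices and $3$-faces, for example that a $3$-vertex lies on at most one small face, that two $3$-faces are not joined by a short path of $3$-vertices, and that the $9^+$-faces bordering a small face are not too crowded with low-degree vertices. Each of these will be proved by exhibiting a small configuration, removing or contracting a controlled piece of it to obtain a smaller graph $G'$ in the same class, applying minimality to $C$-color $G'$, and then extending the coloring: at each uncolored vertex we check that the matchings from $C$ forbid strictly fewer than $|L(v)|$ colors, possibly with a Hall-type argument when several vertices remain uncolored simultaneously.

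The second block is the discharging. Assign initial charges $\mu(v) = d(v) - 4$ for $v \in V$ and $\mu(f) = d(f) - 4$ for $f \in F$. By Euler's formula
\[
\sum_{v \in V} \mu(v) + \sum_{f \in F} \mu(f) = -8.
\]
The only objects with negative initial charge are $3$-vertices and $3$-faces (each with charge $-1$), while faces of length $\ell \ge 5$ carry $\ell - 4 \ge 1$ and $9^+$-faces carry at least $5$. The adjacency restriction guarantees that across each edge of a small face (length $\le 8$) there sits a $9^+$-face, which is the reservoir we will tap. The rules I have in mind send fixed shares from each $9^+$-face to its incident $3$-vertices and to the small faces it is adjacent to across an edge, with extra amounts routed through $5$- to $8$-faces when they serve as intermediate hubs. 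After discharging, the total charge is preserved, so exhibiting that every vertex and face ends up with non-negative charge will contradict the $-8$ total.

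The main obstacle will be the interaction between $3$-vertices lying on a $3$-face and the limited structural control provided by the ``no adjacent cycles of length at most $8$'' hypothesis, which is significantly weaker than forbidding all cycles of length $4$ to $8$ as in Theorem~\ref{main0}. A single bad triangle can be encircled by other small faces of length $4$ through $8$, provided none of them share an edge with it, so the $9^+$-face that must border the triangle across each edge may be forced to absorb the deficits of several neighboring $3$-vertices at once. Proving a sharp enough reducibility statement that limits how many such $3$-vertices a given $9^+$-face can meet along a single boundary segment, and designing discharging rules that simultaneously rescue bad $3$-faces and bad $3$-vertices without double-spending the charge of the large face, is the delicate part. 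I expect this to require a case analysis on the length of the large face and the pattern of small faces glued to it through single vertices.
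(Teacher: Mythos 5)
Your outline goes by direct minimal counterexample on the statement of Theorem~\ref{main1} itself, but the paper never does this: Theorem~\ref{main1} is deduced in two lines from the stronger Theorem~\ref{main3}, in which a set $S$ --- either a single vertex or the full vertex set of a face with $|S|\le 12$ --- carries a prescribed $C$-coloring $\phi_0$ that must be extended. This strengthening is not cosmetic, and its absence is the genuine gap in your plan. Your item (ii), ``$G$ has no separating cycle of length at most $8$, so every short cycle bounds a face,'' is not provable under your induction hypothesis: to reduce a separating cycle $K$ you color $ext(K)\cup K$ by minimality and then must extend the resulting coloring of $K$ into $int(K)$, and that extension step is precisely an instance of the precolored-face statement, not of the unprecolored one. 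Worse, the paper needs to exclude separating cycles of length up to $12$ (Lemma~4.1(d)), not merely $8$, because the key reducible configurations (the tetrad of Lemma~4.4 and the special $9$-face of Lemma~4.7) are handled by identifying two vertices joined by a path of length $4$ through the deleted configuration; ruling out a short cycle through the identified vertex requires that no path of length at most $8$ join them, i.e.\ no separating $12^-$-cycle. The bound $12$ is exactly where the hypothesis $|S|\le 12$ enters. Your ``remove or contract a controlled piece'' gestures at these identifications, but without the strengthened statement the surrounding separating-cycle control is unavailable.

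The same omission breaks your discharging endgame. With $\mu(v)=d(v)-4$ and $\mu(f)=d(f)-4$ the total is $-8$, and you propose to contradict this by showing every object ends nonnegative; but the configurations that would force enough positivity are exactly the ones you cannot reduce. The paper instead fixes an outer face $D$ whose boundary is the precolored set, assigns it $\mu(D)=d(D)+4$ (so the global total is $0$), lets $D$ subsidize its incident $2$- and $3$-vertices at rate up to $\frac{4}{3}$, and closes the argument by showing $\mu^*(D)>0$ using the fact that $d(D)\le 12$ and that some interior $9^+$-face returns surplus to $D$. So the precolored bounded face is simultaneously the engine of the reducibility lemmas and the reservoir of the discharging; a proof of Theorem~\ref{main1} that does not first formulate and prove something like Theorem~\ref{main3} will stall at both stages. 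Beyond this structural issue, your text is an outline rather than a proof: the reducible configurations in (iii) and the discharging rules are described only by intention, whereas the difficulty you correctly identify at the end (a $9^+$-face forced to rescue many bad $3$-vertices along one boundary segment) is resolved in the paper only by the specific tetrad and special-$9$-face lemmas together with a lengthy case analysis for faces of degree $9$ and $10$.
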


We actually prove the following result which is a little stronger than Theorem~\ref{main1}.

\begin{thm}\label{main3}
Let $G$ be a plane graph without adjacent cycles of length at most $8$. Let $S$ be a set of vertices of $G$ such that either $|S|=1$, or $S$ consists of all vertices on a face of $G$. Let $C$ be a $3$-correspondence assignment for $G$ such that $C$ is consistent on every closed walk of length $3$ in $G$. If $|S|\le12$, then for every $C$-coloring $\phi_0$ of $G[S]$, there exists a $C$-coloring $\phi$ of $G$ whose restriction to $S$ is $\phi_0$.
\end{thm}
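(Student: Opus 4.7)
The plan is to proceed by a minimum counterexample combined with discharging. Suppose the theorem fails, and let $(G,S,C,\phi_0)$ be a counterexample with $|V(G)|+|E(G)|$ minimum. Embed $G$ in the plane so that the face whose boundary is $S$ (or a face containing the single vertex $S$) is the outer face $f_0$. The proof divides into three stages: structural reductions, a list of forbidden local configurations, and a discharging count on internal vertices and faces.

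First, I would extract structural properties of $G$ from minimality. For any separating cycle $K$ of length at most $12$, applying the induction hypothesis inside and outside $K$ in turn (with $K$ playing the role of the fixed face, which is permitted since $|V(K)|\le 12$) would assemble into a $C$-coloring of $G$, a contradiction. So $G$ has no short separating cycle. Similar arguments give that $G$ is $2$-connected, the outer boundary is a cycle when $|S|\ge 3$, every internal vertex has degree at least $3$, and short chords inside the outer face are forbidden. The hypothesis already rules out two cycles of length at most $8$ sharing an edge, which will be used repeatedly.

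Second, I would establish a list of reducible configurations. For each candidate local pattern $H$ in the interior of $G$ one removes or modifies $H$, applies the minimality hypothesis to the smaller plane graph to obtain a partial $C$-coloring, and then argues by a pigeonhole count on the matchings $C_e$ that the partial coloring extends to $H$. Here the consistency of $C$ on every closed walk of length $3$ is critical: on any triangle in $G$ we may renumber colors edge by edge so that the three matchings become identity matchings, and the local extension problem collapses to an ordinary list-coloring problem on three-element lists. Typical reducible configurations I would try to forbid include an internal $3$-vertex with two neighbors on a common short face, a $3$-face incident to two $3$-vertices, and chains of low-degree vertices along the boundary of a $9^+$-face.

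Third, I would run a discharging argument. Give every vertex $v$ the initial charge $d(v)-4$ and every face $f$ the initial charge $d(f)-4$, and add $+8$ to the outer face so that Euler's formula forces the total charge to equal $-8$. Then design rules by which $5^+$-faces and $5^+$-vertices donate charge to $3$- and $4$-faces and to $3$-vertices. The non-adjacency hypothesis is the key source: across each edge of a short face of length at most $8$ sits a face of length at least $9$, supplying abundant donors. Showing that, in the absence of all reducible configurations from the previous step, every internal element ends with non-negative charge yields the required contradiction.

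The main obstacle will be aligning the reducible configurations with the discharging rules when short faces may be as long as $8$. This creates a much richer family of incidence patterns than in the classical length-$3$-to-$7$ setting of Theorem~\ref{main0}, and the correspondence-coloring framework forces each extension argument to route through the triangle consistency hypothesis rather than direct list-counting. I expect the tightest cases to be those in which a $3$-face meets a $9$-face along a vertex of degree $3$ whose other neighbors are themselves $3$-vertices on long faces, since the charge flow there is nearly balanced and both the DP-reducibility and the discharging must be pushed to their limits simultaneously.
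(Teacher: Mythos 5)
Your outline matches the paper's strategy at the level of headings (minimal counterexample, structural lemmas, reducible configurations, discharging), but as written it has concrete gaps that would prevent the argument from closing. The most serious one is in your minimality setup. You minimize $|V(G)|+|E(G)|$ only, but the extension arguments you invoke --- ``renumber colors edge by edge so that the three matchings become identity matchings'' and the pigeonhole counts on the $C_e$ --- require the relevant edges to be \emph{full} (perfect matchings). Lemma~\ref{straight} only straightens edges lying on cycles all of whose edges are full, and the hypothesis gives consistency only on closed walks of length $3$, not fullness. The paper secures fullness by adding a third optimization to the minimal counterexample: subject to minimizing vertices and non-$S$ edges, the total number of matching edges $\sum_{uv\in E(G)}|E(C_{uv})|$ is \emph{maximized}. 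This is what drives Lemma~\ref{matching} (every non-boundary edge has $|E(C_{uv})|\ge 2$, and $=3$ if not in a triangle) and Lemma~\ref{333+face}, without which the straightening you rely on is simply unavailable. Your setup has no mechanism to rule out an edge whose matching is empty or a single edge.

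The second gap is that the reducible configurations and the discharging are described as intentions rather than carried out, and the specific configurations the paper needs are not among the ones you name. The two load-bearing reductions are the \emph{tetrad} (a path $v_1v_2v_3v_4$ of internal $3$-vertices on a face with $v_1v_2$ and $v_3v_4$ on triangles, Lemma~\ref{tetrad}) and the \emph{special $9$-face} (Lemma~\ref{special}); both are killed by \emph{identifying} two vertices at distance four, which is legitimate only because there are no separating $\le 12$-cycles and because DP-coloring, unlike list coloring, survives identification. Your phrase ``removes or modifies $H$'' does not commit to this, and the subsequent recoloring of the four deleted vertices genuinely needs the straightened, full edges from the previous point. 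On the discharging side, the tight cases are exactly the $9$- and $10$-faces incident with many bad $3$-vertices, where nonnegativity is achieved only by invoking the tetrad corollary and the special-$9$-face lemma; without those your rules cannot balance. Finally, two bookkeeping slips: adding $+8$ to the outer face makes the total charge $0$ (not $-8$), and then showing every internal element is nonnegative is not yet a contradiction --- the paper must additionally show the outer face ends with \emph{strictly positive} charge, which takes a separate argument about a $9^+$-face meeting $D$ in two $3^+$-vertices.
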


Let $(L,C)$ be a $k$-correspondence assignment on $G$.  An edge $uv\in E(G)$ is {\em straight}  if every $(u,c_1)(v,c_2)\in E(C_{uv})$ satisfies $c_1=c_2$. An edge $uv\in E(G)$ is {\em full} if $C_{uv}$ is a perfect matching.  The following lemma from \cite{DP17} says that an $C$-coloring in certain subgraph $H$ of $G$ is the same as a list coloring, which plays the crucial role in proving results of choosability using DP-coloring.

\begin{lemma}[\cite{DP17}]\label{straight}
Let $G$ be a graph with a $k$-correspondence assignment $C$. Let $H$ be a subgraph of $G$ such that for every cycle $D$ in $H$, the assignment $C$ is consistent on $D$ and all edges of $D$ are full. Then we may rename $L(u)$ for $u\in H$ to obtain a $k$-correspondence assignment $C'$ for $G$ such that all edges of $H$ are straight in $C'$.
\end{lemma}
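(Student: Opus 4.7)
The plan is to carry out the relabeling one connected component of $H$ at a time, using a spanning tree to process the bridge-like edges first and then reading off straightness of the remaining edges from the cycle hypothesis. Fix a connected component $H_0$ of $H$, choose a spanning tree $T$ of $H_0$, and pick any root $r\in V(T)$. I leave $L(r)$ unchanged and process the remaining vertices of $H_0$ in an order in which every non-root vertex $u$ comes after its parent $p$ in $T$.

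When processing such a vertex $u$, the matching $C_{pu}$ has at most $k$ edges with distinct colors appearing on each side, so there is a permutation $\sigma_u$ of $[k]$ satisfying $\sigma_u(b)=a$ for every edge $(p,a)(u,b)\in C_{pu}$ (the required partial assignment is injective and therefore extends to a permutation). Relabeling $L(u)$ by $\sigma_u$ converts each such edge into $(p,a)(u,a)$, making $pu$ straight. Leaving every other vertex (in particular every vertex outside $H$ and the chosen root of each component) relabeled by the identity, I obtain a $k$-correspondence assignment $C'$ on $G$ in which all edges of the spanning forest of $H$ are straight. Because each $\sigma_u$ is a bijection, consistency of $C$ on any closed walk and fullness of any edge are inherited by $C'$.

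It remains to verify that every non-tree edge $uv$ of $H$ is also straight in $C'$. Let $D$ be the fundamental cycle of $uv$ with respect to $T$, namely the unique $u$-$v$ path $u=v_1,v_2,\ldots,v_\ell=v$ in $T$ closed off by the edge $vu$. By hypothesis every edge of $D$ is full and $C'$ is consistent on $D$. Fix any $c\in[k]$: straightness of the tree edges forces the matched color sequence along the tree path to stay constant at $c$, so the walk arrives at $v$ with color $c$; fullness of $uv$ provides some $c'\in[k]$ with $(v,c)(u,c')\in C'_{uv}$; and consistency of $C'$ on $D$ forces $c'=c_1=c$. Hence $(v,c)(u,c)\in C'_{uv}$ for every $c\in[k]$, and since $uv$ is full we have $|C'_{uv}|=k$, so $C'_{uv}=\{(v,c)(u,c):c\in[k]\}$ and $uv$ is straight.

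The main point to watch is that the consistency and fullness hypotheses on $C$ actually survive the relabeling, because the conclusion in the last paragraph applies them to $C'$ rather than to $C$. Both reductions are elementary: a per-vertex bijection of $[k]$ transports matched color-sequences around any closed walk to matched color-sequences with the same endpoint-color equality, and carries perfect matchings to perfect matchings. Once this is noted, the spanning-tree renaming above together with the cycle-consistency closure gives the lemma.
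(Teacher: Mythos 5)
Your proof is correct. The paper does not prove this lemma itself --- it is quoted from Dvo\v{r}\'ak and Postle \cite{DP17} without proof --- and your argument is essentially the standard one from that source: rename colors along a rooted spanning forest of $H$ to straighten the tree edges, then use fullness and consistency on each fundamental cycle to conclude that every non-tree edge of $H$ is straight as well; your closing remark that per-vertex bijections preserve both consistency and fullness is exactly the point that needs checking, and you check it.
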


\n{\bf Proof of Theorem~\ref{main1}}
By Theorem~\ref{main2}, we need to prove  that $G$ is $C$-colorable for arbitrary  $3$-correspondence assignment $C$ such that $G$ is consistent on every closed walk of length $3$ in $G$.   Take $S$ to be an arbitrary vertex in $G$.  By Theorem~\ref{main3}, $G$ is $C$-colorable.

\section{Proof of Theorem~\ref{main3}}\label{redults}

From now on, we always let $C$ be a $k$-correspondence assignment on $G$ that is consistent on every closed walk of length $3$.   Assume that Theorem~\ref{main3} fails, and let $G$ be a minimal counterexample, that is, there exists no $C$-coloring $\phi$ of $G$ whose restriction to $S$ is equal to $\phi_0$ such that
\begin{equation}
 |V(G)| \mbox{ is minimized.}
 \end{equation}
 Subject to (1), the number of edges of $G$ that do not join the vertices of $S$
 \begin{equation}
   |E(G)|-|E(G[S])| \mbox{  is minimized}.
  \end{equation}
Subject to (1) and (2), the total number of edges in the matchings of the $3$-correspondence assignment $C$
 \begin{equation}
 \sum_{uv\in E(G)}|E(C_{uv})| \mbox{ is maximized}.
\end{equation}

When $S$ consists of the vertices of a face, we will always assume that $D$ is the outer face of the embedding of plane graph $G$. And we call a vertex $v$ or a face $f$ {\em internal} if $v\notin D$ or $f\ne D$.

The following Lemma~\ref{counter} to Corollary~\ref{cor} about some crucial properties of the minimal counterexample and the correspondence assignment are basically from \cite{DP17}. For completeness, we include the proofs here.

\begin{lemma}\label{counter}
 Each of the following holds:
\begin{enumerate}[(a)]
\item $V(G)\ne S$;
\item $G$ is $2$-connected;
\item each vertex not in $S$ has degree at least $3$;
\item $G$ does not contain separating $k$-cycle for $3\le k\le12$;
\item $S=V(D)$ and $D$ is an induced cycle.
\item If $P$ is a path of length $2$ or $3$ with both ends in $S$ and no internal vertex in $S$, then no edge of $P$ is contained in a triangle that shares at most one vertex with $S$.
\end{enumerate}
\end{lemma}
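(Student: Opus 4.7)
The plan is to establish each of (a)--(f) by supposing the asserted property fails and deriving either a $C$-colouring of $G$ extending $\phi_0$ (contradicting the counterexample status) or a proper sub-instance violating the minimality assumptions $(1)$--$(3)$.

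Parts (a)--(c) follow the usual minimality template. For (a), $V(G)=S$ means $\phi_0$ itself $C$-colours $G$. For (b), a disconnection or cut vertex $v$ yields a separation $G=G_1\cup G_2$ with $|V(G_1)\cap V(G_2)|\le 1$; I would apply the minimality clause of $(1)$ first on the side carrying $S$, then on the other side with $\{v\}$ as its singleton $S$. For (c), a vertex $v\notin S$ with $\deg(v)\le 2$ is deleted, $G-v$ is $C$-coloured by $(1)$, and $v$ is greedily coloured using its three-element list against at most two forbidden colours.

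For (d), suppose $K$ is a separating cycle of length $k$ with $3\le k\le 12$. Let $G_{\mathrm{out}},G_{\mathrm{in}}$ denote the two closed sides of $K$; both are proper subgraphs of $G$ and neither acquires adjacent short cycles. Arranging matters so that the outer face $D$ (and hence the original $S$, assumed to be $V(D)$) lies in $G_{\mathrm{out}}$, apply the minimality hypothesis $(1)$ to $G_{\mathrm{out}}$ to obtain a $C$-colouring $\phi'$ of $V(G_{\mathrm{out}})$ extending $\phi_0$, which in particular provides values on $V(K)$. Re-embed $G_{\mathrm{in}}$ with $K$ as its outer face, set $S'=V(K)$ of size at most $12$, and apply $(1)$ again with initial colouring $\phi'|_{V(K)}$; the assignment $C$ remains consistent on closed $3$-walks when restricted to $G_{\mathrm{in}}$. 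Gluing along $K$ produces a $C$-colouring of $G$ extending $\phi_0$, a contradiction.

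For (e), chords of $D$ are excluded by (d), because a chord together with a subarc of $D$ would form a separating cycle of length at most $|V(D)|\le 12$. To rule out $|S|=1$, one uses (b) and (c) to pick a small face incident to the singleton; the greedy-extension criterion underlying $(2)$ then promotes $S$ to equal $V(D)$ for such a small face, since otherwise minimality of $|E(G)|-|E(G[S])|$ would fail. Finally, part (f) is the principal obstacle. Assume an edge $e$ of such a path $P$ lies in a triangle $T$ meeting $S=V(D)$ in at most one vertex. I would close $P$ against a short subarc of the induced outer cycle $D$ from (e) to construct a cycle $K_1$; exploiting that $D$ is induced and of length at most $12$, together with $|P|\in\{2,3\}$ and the interior position of $e$, one should obtain $|K_1|\le 8$. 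Since $K_1$ and $T$ share the edge $e$, these are two adjacent cycles of length at most $8$ in $G$, contradicting the hypothesis. The main technical difficulty lies here: the subarc of $D$ must be chosen short enough under every configuration of $|P|$, $|V(D)|$, and the placement of the endpoints of $P$ on $D$, and one must verify that $K_1$ is a genuine cycle rather than a closed walk, invoking the induced structure of $D$ and the absence of short separating cycles from (d) to rule out degenerate intersections.
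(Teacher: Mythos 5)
Your parts (a)--(d) match the paper's argument. But there are two genuine gaps. First, in (e) the case $|S|=1$ is not disposed of by ``picking a small face incident to the singleton'': the vertex $v$ with $S=\{v\}$ may lie on no $12^-$-face at all. The paper splits into two cases. If $v$ lies on some $12^-$-cycle, then by (d) that cycle bounds a face, and one re-embeds with that face outer and invokes minimality condition (2). If every cycle through $v$ has length at least $13$, one must \emph{add} an edge $v_1v_2$ between two neighbours of $v$ on a $13^+$-face, take $S_2=\{v,v_1,v_2\}$ with an edgeless matching on the new edge, and again invoke condition (2); your sketch has no mechanism for this second case. (Also, your chord argument asserts the cycle formed by a chord and an arc of $D$ is separating; that needs (a) to guarantee a vertex off $D$ lies inside one of the two cycles, and then (d) forces both to bound faces, contradicting (a) -- a small but real extra step.)

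Second, and more seriously, your plan for (f) fails in the extremal case. Writing $D_1,D_2$ for the two cycles formed by $P$ and the two arcs of $D$, you have $|D_1|+|D_2|=|D|+2|P|\le 12+6=18$, so the shorter cycle has length at most $9$, \emph{not} at most $8$. Your intended contradiction (a cycle $K_1$ of length at most $8$ sharing the edge $e$ with the triangle $T$) is exactly how the paper shows that \emph{both} $D_1$ and $D_2$ must be $9^+$-cycles, but this only forces equality throughout: $|D|=12$, $|P|=3$, $|D_1|=|D_2|=9$. That configuration is not yet a contradiction and requires a separate argument: by (d) the $9$-cycle $D_1$ is not separating, so the apex $x$ of the triangle must lie on $D_1$ itself, and then the two chords $xx_i$, $xx_{i+1}$ cut $D_1$ into the triangle and two cycles whose lengths sum to $10$, producing an $8^-$-cycle adjacent to $T$. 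Without this final step the proof of (f) is incomplete.
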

\begin{proof}
(a) Suppose otherwise that $V(G)=S$. In this case, $\phi_0$ is a $C$-coloring of $G$, a contradiction.

(b) By the condition (1), $G$ is connected. Suppose otherwise that $v$ is a cut-vertex of $G$. Thus, we may assume that $G=G_1\cup G_2$ such that $V(G_1)\cap V(G_2)=\{v\}$.  If $v\in S$, then by the condition (1)  $G_1$ and $G_2$ have $C$-coloring extending $\phi_0$ such that these $C$-colorings have the same color at $v$.  Thus, $G$ has a $C$-coloring, a contradiction. Thus, assume that $v\notin S$. We assume, without loss of generality, that $S\subseteq V(G_1)$.  By the condition (1), $\phi_0$ can be extended to $\phi_1$ of $G_1$. Then, $\phi_1(v)$ can be extended to $\phi_2$ of $G_2$. Now $\phi_1$ and $\phi_2$ together give an extension of $\phi_0$ to $G$, a contradiction.

(c) Let $v$ be a $2^-$-vertex in $G-S$. By the condition (1), $\phi_0$ can be extended to a $C$-coloring $\phi$ of $G-v$. Then we can extend $\phi$ to $G$ by selecting a color $\phi(v)$ for $v$ such that for each neighbor $u$ of $v$, $(u,\phi(u))(v,\phi(v))\notin E(C_{uv})$, a contradiction.

(d)  Let $K$ be a separating $k$-cycle with $3\le k\le 12$. By the condition (1), $\phi_0$ can be extend to a $C$-coloring $\phi_1$ of $ext(K)\cup K$, and the restriction of $\phi_1$ to $K$ extends to a $C$-coloring $\phi_2$ of $int(K)$. Thus, $\phi_1$ and $\phi_2$ together give a $C$-coloring of $G$ that extends $\phi_0$, a contradiction.

(e) Suppose otherwise that $S=\{v\}$ for some vertex $v\in V(G)$. If $v$ is incident with a $12^-$-cycle $f_1$, we may assume that  $v$ is incident with a $12^-$-face by (d). We now redraw $G$ such that $f_1$ is the outer cycle of $G$ and choose a $C$-coloring $\phi$ on the boundary of $f_1$. Let $S_1=V(f_1)$. In this case, $|E(G)|-|E(G[S_1])|<|E(G)|-|E(G[S])|$. By the condition (2), $G$ has a $C$-coloring that extends the colors of $S_1$, thus $G$ has a $C$-coloring that extends $\phi_0$, a contradiction. Thus, we may assume that all cycles incident with $v$ are $13^+$-cycles.   Let $f_2$ be a $13^+$-face  incident with $v$. Let $v_1$ and $v_2$ be the neighbors of $v$ on $f_2$. Let $G_2=G\cup \{v_1v_2\}$. We redraw $G$ such that $f_2$ is the outer cycle of $G_2$. Let $S_2=\{v,v_1,v_2\}$ and $C_2$ be obtained from $C$ by letting the matching between $v_1$ and $v_2$ be edgeless.  It is easy to verify that $|E(G_2)|-|E(G[S_2])|<|E(G)|-|E(G[S])|$. By the condition (2), $G_2$ has a $C_2$-coloring that extends the colors of $S_2$. This implies that $G$ has a $C$-coloring that extends $\phi_0$, a contradiction again. So $S=V(D)$.

We may assume that $D$ contains a chord $uv$.  By (a) $V(G)\not=S$. Thus $D$ together with the chord $uv$ forms two cycles with common edge $uv$, each of which has  length less than 12 by our assumption that $|S|\leq 12$. By (d), such two cycles are the boundaries of two faces. This means that $S=V(G)$, a contradiction to (a).

(f) Let $P=x_1x_2\ldots x_k$, where $k=3, 4$, $x_1, x_k$ are in $D$ and no internal vertex in $D$. Suppose otherwise that one edge $x_ix_{i+1}$ of $P$ is contained in a triangle $f=[x_ix_{i+1}x]$ which  has at most one common vertex with $D$, where $1\leq i\leq k-1$. Let $P_1$ and $P_2$ be two paths on $D$ between $x_1$ and $x_k$. Then  $D_i=P\cup P_i$ is a cycle for $i=1, 2$. We assume, without loss of generality, that $f$ is inside of $D_1$. Note that $G$ contains no adjacent cycles of length at most $8$. By our assumption, $f$ and $P_1$ have at most one common vertex. Since $D_1$  and $f$ are adjacent, $D_1$ is a $9^+$-cycle. Similarly, $D_2$ is also $9^+$-cycle. Since $|S|\le12$, $9+9\leq |D_1|+|D_2|=|D|+2(k-1)\leq 18$, which implies that $k=4$, $|D_1|=|D_2|=9$, $|S|=12$ and $P$ is a path of length $3$. By (d),   $D_1$ is not a separating $9$-cycle and $x$ is in $D$. This implies that $G$ has adjacent $8^-$-cycles, a contradiction.
\end{proof}

\begin{lemma}\label{matching}
 If $e=uv\notin E(D)$, then $|E(C_{uv})|\ge2$. Moreover, if $e$ is not contained in a triangle, then $|E(C_{uv})|=3$.
\end{lemma}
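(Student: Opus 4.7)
The plan is to exploit the extremality condition (3), which maximizes $\sum_{e\in E(G)}|E(C_e)|$. If $|E(C_{uv})|\le 1$ (resp.\ $|E(C_{uv})|<3$ when $uv$ lies in no triangle), I will add a new edge to $C_{uv}$ to obtain a $3$-correspondence assignment $C'$ that (a)~is still consistent on every closed walk of length $3$, and (b)~still admits no $C'$-coloring of $G$ extending $\phi_0$; this contradicts~(3). Item (b) is immediate because every $C'$-coloring is a $C$-coloring (adding matching edges only forbids more color pairs), and since $uv\notin E(D)$ and $G[S]=D$ is induced by Lemma~\ref{counter}(e), the coloring $\phi_0$ of $S$ remains a $C'$-coloring of $G[S]$.

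To verify (a), note that since $G$ has no adjacent $8^-$-cycles and triangles are $3$-cycles, no two triangles can share an edge; hence $uv$ lies in at most one triangle. Triangles disjoint from $uv$ are unaffected by the modification, so consistency only needs to be checked on the (at most one) triangle through $uv$. If $uv$ is in no triangle, then \emph{any} new edge between a color unmatched at $u$ and one unmatched at $v$ may be added without constraint; this can be iterated as long as $|E(C_{uv})|<3$, yielding $|E(C_{uv})|=3$ and proving the ``moreover'' clause.

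The main work is when $uv$ lies in a unique triangle $uvw$. Define the injective partial map $\sigma\subseteq[3]\times[3]$ by $\sigma(b)=a$ iff some color $c$ satisfies $(v,b)(w,c)\in C_{vw}$ and $(w,c)(u,a)\in C_{wu}$. Consistency of $C$ on $uvw$ forces $\sigma$ to agree with $C_{uv}^{-1}$ wherever both are defined, and a new edge $(u,a)(v,b)$ preserves consistency iff either $b\notin\operatorname{dom}(\sigma)$ or $\sigma(b)=a$. With $|E(C_{uv})|\le 1$ there are at least two unmatched colors at each end, giving at least four candidate pairs $(a,b)$. I expect the main technical step to be a short case analysis on these candidates: using that $\sigma$ is an injective partial function and that, if $C_{uv}=\{(u,a_0)(v,b_0)\}$, consistency yields $\sigma(b_0)=a_0$ whenever defined, one checks that $\sigma$ cannot send both unmatched $v$-colors to the matched $u$-color $a_0$. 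Hence at least one candidate pair $(a,b)$ with $a,b$ unmatched satisfies $\sigma(b)=a$ (or $b\notin\operatorname{dom}(\sigma)$), and adding that edge produces the desired $C'$, contradicting~(3).
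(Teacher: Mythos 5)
Your overall strategy is the paper's: exploit the maximality condition (3) by adding an edge to $C_{uv}$ that keeps the assignment consistent on every closed $3$-walk, and your treatment of the case where $uv$ lies in no triangle is exactly the paper's. But in the triangle case your consistency criterion is wrong, and this is a genuine gap. You assert that adding $(u,a)(v,b)$ preserves consistency if and only if $b\notin\operatorname{dom}(\sigma)$ or $\sigma(b)=a$. That condition only accounts for the closed walks based at $u$ (of the form $u\to v\to w\to u$ and its reverse). It misses the closed walks based at $w$, of the form $w\to u\to v\to w$: if $(u,a)$ is matched in $C_{wu}$ to some $(w,\alpha)$ and $(v,b)$ is matched in $C_{vw}$ to some $(w,\beta)$ with $\alpha\ne\beta$, the new edge creates the inconsistent walk $(w,\alpha)(u,a)(v,b)(w,\beta)$, and this can happen even when $b\notin\operatorname{dom}(\sigma)$ (namely, when $(w,\beta)$ is unmatched in $C_{wu}$). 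Concretely, take $C_{uv}=\{(u,1)(v,1)\}$, $C_{vw}=\{(v,2)(w,2)\}$, $C_{wu}=\{(w,1)(u,2),\,(w,3)(u,3)\}$: here $C$ is consistent on the triangle and $\operatorname{dom}(\sigma)=\emptyset$, so your criterion accepts the pair $(a,b)=(2,2)$, yet adding $(u,2)(v,2)$ makes the walk $wuvw$ inconsistent via $(w,1)(u,2)(v,2)(w,2)$.

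Because of this, your injectivity argument only produces a pair passing your (insufficient) test, not a pair whose addition genuinely keeps $C'$ consistent, so the contradiction with (3) is not established. The paper closes exactly this hole with two steps your sketch omits: it first shows that each of $(u,2),(u,3)$ must be matched in $C_{uw}$ and each of $(v,2),(v,3)$ in $C_{vw}$ (otherwise the forced inconsistencies of the modified assignments, combined with the existing edge $(u,1)(v,1)$, would exhibit a closed $3$-walk on which $C$ itself is inconsistent), and it then applies the pigeonhole principle to the three colours at $w$ to obtain an actual path $(u,c_u)(w,c_w)(v,c_v)$ in $C_{uw}\cup C_{vw}$; adding $(u,c_u)(v,c_v)$ then rules out inconsistency for the walks based at all three of $u$, $v$ and $w$ simultaneously. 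Your idea of handling $C_{uv}=\emptyset$ by the same edge-addition device (rather than deleting $e$ and invoking condition (2), as the paper does) is a reasonable variation, but it suffers from the same flaw in the consistency check and would need the corrected criterion as well.
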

\begin{proof}
First we show that if $e$ is not contained in a triangle, then $|E(C_{uv})|=3$. Suppose otherwise that $uv$ is not full. Let $C'$ be the 3-correspondence assignment obtained from $C$ by adding edges to $C_{uv}$ so that $uv$ is full and keeping $C_e$ unchanged for every edge $e\not=uv$. In this case, $\sum_{e}|E(C'_e)|>\sum_e|E(C_e)|$, contrary to  the condition (3) of $G$.

We now assume that $uv$ is contained in a triangle $f=[uvw]$. If $C_{uv}=\emptyset$, then let $G'=G-e$.  Thus, $|E(G')|-|E(S)|<|E(G)|-|E(S)|$, contrary to the condition (2) of $G$. So we may assume that $|E(C_{uv})|=1$.  By Lemma~\ref{straight}, we may assume that $C_{uv}=\{(u,1)(v,1)\}$. Define $C^{a, b}$ by adding an edge $(u,a)(v,b)$ to $C_{uv}$ and keeping $C_e$ unchanged for every edge $e\not=uv$, where $a, b\in \{2, 3\}$. In this case, $\sum_{e}|E(C^{a, b}_e)|>\sum_e|E(C_e)|$. On the other hand, if $G$ has a $C^{a, b}$-coloring, then it has a $C$-coloring.   Since $G$ has no $C$-coloring, by the condition (3) of $G$,  there is a  closed $3$-walk bound $f$ which is inconsistent in $C^{a, b}$.

Assume that  $C_{uw}$ has no edge incident with $(u, 2)$. Then the only closed $3$-walk in $C^{2, b}$ may be inconsistent is $uvwu$ for $b\in \{2, 3\}$, and thus $C_{vw}\cup C_{wu}$ contains a path  $(v,b)(w, d_b)(u, c_b)$ for some colors $d_b$ and $c_b$ such that $c_b\not=2$. By symmetry, we may assume that $c_b=1$. This implies that $C$ has a path $(v, 1)(u, 1)(w, d_b)(v, b)$. This means that $C$ is inconsistent on $vuwv$, contrary to our assumption that $C$ is consistent on every closed walk of length 3.  So far, we have proved that  $C_{uw}$ has an edge incidents with $(u, 2)$.

 By symmetry, we may assume that $C_{uw}$ has an edge incident with $(u,3)$, $C_{vw}$ has two independent edges incident with $(v,2)$ and $(v,3)$, respectively. By the pigeonhole principle, there exist colors $c_u, c_v\in \{2, 3\}$ and $c_w$ such that  $C_{vw}\cup C_{wu}$ contains a path $(u, c_u)(w, c_w)(v, c_v)$. This means that $C^{c_u, c_v}$  is consistent on all closed $3$-walks, which is a contradiction. Therefore, add an edge $(u, c_u)(v, c_v)$ to $C$ and denote by $C''$. This leads to that $\sum_{e}|E(C''_e)|>\sum_e|E(C_e)|$, contrary to  the condition (3) of $G$.
\end{proof}

\begin{lemma}\label{333+face}
Let $f=[uvw]$ be a $3$-face in $G$ with $d(u)=d(w)=3$ and $u,w\notin S$. Then each edge of $f$ is full.
\end{lemma}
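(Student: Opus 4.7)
The plan is to proceed by contradiction: suppose some edge of $f$ is not full, and use the maximality condition~(3) to force $C$ itself to be inconsistent on some closed $3$-walk, contradicting the standing hypothesis on $C$. Since $u,w\notin S=V(D)$, none of the three edges $uv$, $vw$, $uw$ lies in $E(D)$, so by Lemma~\ref{matching} each has $|E(C_e)|\ge 2$; if one is not full, its matching has exactly two edges. Up to the symmetry swapping the two $3$-vertices $u$ and $w$, there are just two cases to handle: $uv$ is not full (with $vw$ handled identically), and $uw$ is not full.

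For the first case, apply Lemma~\ref{straight} to the single edge $uv$ (which contains no cycle) to rename $L(v)$ so that $C_{uv}=\{(u,2)(v,2),(u,3)(v,3)\}$, with $(u,1)$ and $(v,1)$ unmatched. Let $C'$ be obtained from $C$ by adding the edge $(u,1)(v,1)$ to $C_{uv}$; then $\sum_e|E(C'_e)|>\sum_e|E(C_e)|$. Since any $C'$-coloring is also a $C$-coloring (the constraints of $C'$ contain those of $C$), condition~(3) forces $C'$ to be inconsistent on some closed $3$-walk. The only closed $3$-walks of $G$ using edge $uv$ are triangles through $uv$, and the ``no adjacent cycles of length at most $8$'' assumption forbids two triangles sharing an edge; so the only such triangle is $f$, and the inconsistent walk lies in $f$ and uses the new edge. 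A direct check shows this forces one of two configurations: \textbf{(I)} $(v,1)(w,\alpha)\in C_{vw}$ and $(w,\alpha)(u,\beta)\in C_{uw}$ with $\beta\in\{2,3\}$, or \textbf{(II)} $(u,1)(w,\gamma)\in C_{uw}$ and $(w,\gamma)(v,\delta)\in C_{vw}$ with $\delta\in\{2,3\}$.

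The crux is that the straight part of $C_{uv}$ closes the dangling path into a closed $3$-walk already lying in $C$. In \textbf{(I)}, since $\beta\in\{2,3\}$ the edge $(u,\beta)(v,\beta)$ is already in $C_{uv}$, so the closed walk $v\to w\to u\to v$ with colors $(v,1),(w,\alpha),(u,\beta),(v,\beta)$ uses only edges of $C$; as $\beta\neq 1$, this is a closed $3$-walk on which $C$ is inconsistent, contradicting the hypothesis on $C$. Case~\textbf{(II)} is handled identically by the walk $u\to w\to v\to u$ with colors $(u,1),(w,\gamma),(v,\delta),(u,\delta)$. Hence $uv$ (and by the $u\leftrightarrow w$ symmetry also $vw$) must be full.

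For the remaining case $uw$ not full, the argument is completely parallel: rename $L(w)$ so that $C_{uw}=\{(u,2)(w,2),(u,3)(w,3)\}$, add $(u,1)(w,1)$ to form $C'$, and invoke~(3) to force an inconsistency of $C'$ on $f$. The two resulting configurations, via the straight edges $(u,\epsilon)(w,\epsilon)\in C_{uw}$ with $\epsilon\in\{2,3\}$, close off to the closed $3$-walks $w\to v\to u\to w$ with colors $(w,1),(v,\delta),(u,\epsilon),(w,\epsilon)$, or $u\to v\to w\to u$ with colors $(u,1),(v,\tau),(w,\tau'),(u,\tau')$, each inconsistent in $C$. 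Thus $uw$ is also full. The main obstacle is the bookkeeping in each configuration, verifying that the ``return edge'' completing the exhibited closed walk is an edge of $C$ (not the newly added one), which is what converts the failure of~(3) into a direct contradiction with the consistency hypothesis on $C$.
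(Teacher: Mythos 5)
Your reduction to the extremality condition (3) is sound — adding an edge to a non-full $C_{uv}$ and concluding that the augmented assignment $C'$ must be inconsistent on a closed $3$-walk through $f$ is exactly the right move — but the ``direct check'' at the heart of the argument is incomplete, and the missing case is fatal. The new edge $(u,1)(v,1)$ can occur as the \emph{middle} step of the inconsistent walk, i.e.\ the walk is based at $w$: there exist colors $c\ne c'$ with $(w,c)(u,1)\in C_{uw}$ and $(v,1)(w,c')\in C_{vw}$. This is neither your configuration (I) nor (II), since those require the two edges meeting $w$ to share the same cover vertex $(w,\alpha)$; and it cannot be closed into a walk of $C$ through $C_{uv}$, because $(u,1)$ and $(v,1)$ are precisely the unmatched vertices of $C_{uv}$. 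Concretely, take $C_{uv}=\{(u,2)(v,2),(u,3)(v,3)\}$, $C_{uw}=\{(u,1)(w,2),(u,2)(w,1)\}$, $C_{vw}=\{(v,1)(w,3),(v,2)(w,1)\}$: each matching has at least two edges (so Lemma~\ref{matching} is satisfied), $C$ is consistent on $f$, yet the unique augmentation $C'$ is inconsistent on the walk $w\to u\to v\to w$. In this situation condition (3) yields no contradiction, and your argument stalls.

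A telling symptom is that your proof never uses the hypothesis $d(u)=d(w)=3$, whereas the paper needs it essentially. The paper's proof does not augment a single matching; it replaces $C$ on all three edges of $f$ by an assignment $C'$ that is straight and full on $f$ (after first straightening $uu_1,uv,uw,ww_1$ via Lemma~\ref{straight}), which is automatically consistent on every closed $3$-walk, so condition (3) produces a genuine $C'$-coloring $\varphi'$ extending $\phi_0$. It then repairs $\varphi'$ into a $C$-coloring by recoloring the two degree-$3$ vertices $u$ and $w$, using their third neighbors $u_1,w_1$ and Lemma~\ref{matching} to rule out the bad case — exactly the step for which the degree hypothesis is indispensable. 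To salvage your approach you would have to handle the $w$-based inconsistency separately, and the example above shows that this cannot be done by consistency of $C$ alone.
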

\begin{proof}
Suppose to the contrary that at least one of all edges of $f$ is not full. Let $u_1$ ($w_1$) be the neighbor of $u$ ($w$) rather than $v$ and $w$ ($v$ and $u$). Applying Lemma~\ref{straight} to subgraph induced by the edge set $\{uu_1, uv, uw, ww_1\}$, these edges are straight in $C$.

Suppose that $C'$ is the $3$-correspondence assignment for $G$ such that  $C'_e=C_e$ for each  $e\in E(G)\setminus E(f)$ and  that all edges of $f$ are straight and full in $C'$.  Since each edge of $f$ is full in $C'$ but not in $C$, $\sum_{e\in E(G)} |E(C'_{e})|>\sum_{e\in E(G)}|E(C_e|$, By the condition (3) of $G$,   $\varphi_0$ can be extended to  a $C'$-coloring $\varphi'$ of $G$. On the other hand, by our assumption,   $G$ is not  $C$-colorable. This contradiction is produced since $C'$ differs from $C$ only on the edges of $f$. Note that all edges of $f$ other than $vw$ are straight in $C$.  By symmetry, we may assume that $\varphi'(v)=1,\varphi'(w)=2$ and $(v,1)(w,2)\in E(C_{vw})$. If $C_{uv}$ has no edge incident with  $(v,1)$, then we can modify $\varphi'$ to a $C$-coloring of $G$ by recoloring $w$ by a color $c$ in $\{1,3\}\setminus \{\varphi'(w_1)\}$ and by recoloring $u$ by a color from $\{1,2,3\}\setminus\{c,\varphi'(u_1)\}$. Thus, we may assume that $C_{uv}$ has one edge incident with $(v,1)$. Since the edge $uv$ is straight in $C$,  $(v,1)(u,1)\in E(C_{uv})$. By Lemma~\ref{matching},  $|E(C_{uw})|\ge2$. So $(u,2)(w,2)\in E(C_{uw})$ or $(u,1)(w,1)\in E(C_{uw})$, which implies that $C$ is not consistent on all closed $3$-walks in $f$,  a contradiction.
\end{proof}

\begin{figure}[ht]
\centering
\includegraphics[scale=0.46]{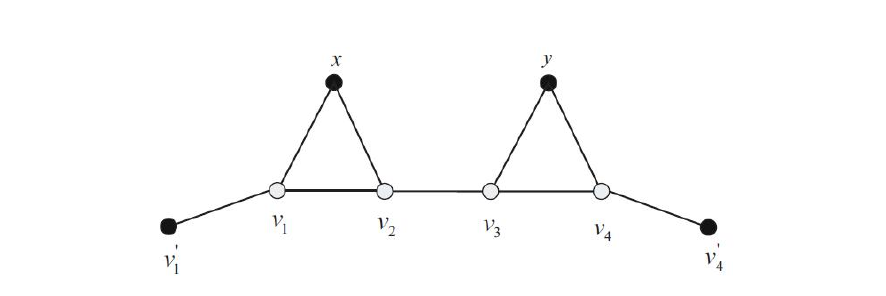}
\caption{A tetrad.}\label{tetrad}
\end{figure}

A {\em tetrad} in a plane graph is a path $v_1v_2v_3v_4$ of vertices of degree three contained in the boundary of a face, such that both $v_1v_2$ and $v_3v_4$ are edges of triangles. See Fig.1.

\begin{lemma}\label{tetrad}
Every tetrad in $G$ contains a vertex of $S$.
\end{lemma}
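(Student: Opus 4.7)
The plan is to argue by contradiction. Assume a tetrad $v_1v_2v_3v_4$ lies entirely outside $S$. Write $T_1=[v_1v_2u_1]$ and $T_2=[v_3v_4u_2]$ for the two incident triangles, and let $u_0,u_3$ denote the third neighbors of $v_1,v_4$, which lie on the face $f$ whose boundary contains the tetrad. First I would collect the structural facts forced by the no-adjacent-cycles-of-length-at-most-$8$ hypothesis together with $d(v_i)=3$: a short case analysis shows that the eight vertices $v_1,v_2,v_3,v_4,u_0,u_1,u_2,u_3$ are pairwise distinct and that none of the edges $v_1u_0$, $v_2v_3$, $v_4u_3$ lies in any triangle of $G$ (any identification or extra triangle would produce two $8^-$-cycles sharing an edge). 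By Lemma \ref{matching} these three edges are therefore full, and by Lemma \ref{333+face} so are all edges of $T_1$ and $T_2$.

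Next, let $H$ be the subgraph on these eight vertices with edge set $E(T_1)\cup E(T_2)\cup\{v_1u_0,v_2v_3,v_4u_3\}$. Its only cycles are $T_1$ and $T_2$, which are consistent closed $3$-walks with full edges, so Lemma \ref{straight} permits renaming the lists on $H$ so that every edge of $H$ becomes straight. Applying condition (1) to $G'=G-\{v_1,v_2,v_3,v_4\}$ we extend $\phi_0$ to a $C$-coloring $\phi$ of $G'$ and set $a_i=\phi(u_i)$. After the renaming, the residual task is the pure list-coloring of the path $v_1v_2v_3v_4$ with lists $L'(v_1)=[3]\setminus\{a_0,a_1\}$, $L'(v_2)=[3]\setminus\{a_1\}$, $L'(v_3)=[3]\setminus\{a_2\}$, $L'(v_4)=[3]\setminus\{a_2,a_3\}$, of sizes at least $1,2,2,1$ respectively.

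If any of $|L'(v_1)|,|L'(v_4)|$ equals $2$ then a greedy extension along the path succeeds. The same is true when $|L'(v_1)|=|L'(v_4)|=1$ but $a_0\ne a_3$, since then the forced values $\phi(v_2)=a_0$ and $\phi(v_3)=a_3$ are automatically different. The only problematic case is $|L'(v_1)|=|L'(v_4)|=1$ with $a_0=a_3$ (equivalently $a_0\ne a_1$, $a_3\ne a_2$, and $a_0=a_3$), and this is the main obstacle. I would address it by re-applying condition (1) to $G-\{v_1,v_2,v_3,v_4,u_0\}$, obtaining a new extension $\phi^\ast$ of $\phi_0$, and then attempting to color $u_0$ with $a_1$ in order to force $|L'(v_1)|=2$ and return to the easy case. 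The color $a_1$ is not forbidden against $u_1$ because $u_0u_1\notin E(G)$ (otherwise $[v_1u_0u_1]$ would be a triangle sharing the edge $v_1u_1$ with $T_1$, violating our hypothesis); a careful accounting of $u_0$'s remaining neighbors, combined with the fullness supplied by Lemma \ref{matching}, shows that this recoloring (or the symmetric one at $u_3$) is always feasible. Once $a_0=a_1$ (or $a_3=a_2$) the easy case applies, and we obtain a $C$-coloring of $G$ extending $\phi_0$, the desired contradiction.
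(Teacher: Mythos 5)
Your setup is sound and follows the paper's opening moves: the eight vertices are distinct, the edges $v_1u_0$, $v_2v_3$, $v_4u_3$ and the two triangles are full by Lemmas~\ref{matching} and~\ref{333+face}, Lemma~\ref{straight} straightens them, and the problem reduces to list-coloring the path $v_1v_2v_3v_4$ from lists of sizes $1,2,2,1$. You have also correctly isolated the unique obstruction, namely $|L'(v_1)|=|L'(v_4)|=1$ with $a_0=a_3$. The gap is in your resolution of that case. Re-applying minimality to $G-\{v_1,v_2,v_3,v_4,u_0\}$ yields an extension $\phi^\ast$ that need not agree with $\phi$ on $u_1,u_2,u_3$, so ``color $u_0$ with $a_1$'' is not even well-posed; more seriously, $u_0$ is an arbitrary vertex of $G$: it may lie in $S$ (Lemma~\ref{counter}(f) only limits how many of $u_0,u_1,u_2,u_3$ meet $S$, it does not exclude $u_0$), and it may have arbitrarily large degree, so its other neighbors can forbid every color except the one it already has. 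Neither Lemma~\ref{matching} nor fullness gives you any freedom at $u_0$; the assertion that the recoloring ``is always feasible'' is precisely the statement that needs proof, and it fails for a generic high-degree $u_0$.

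The missing idea --- and the reason the paper works with DP-coloring at all --- is to preclude the bad case \emph{before} extending $\phi_0$, by identifying $u_0$ with $u_2$ (the paper's $v_1'$ and $y$) in $G-\{v_1,v_2,v_3,v_4\}$ after straightening. Lemma~\ref{counter}(d) shows $u_0$ and $u_2$ are joined by no path of length at most $8$ (otherwise $u_0v_1v_2v_3u_2$ would close a separating $12^-$-cycle), so the identification creates no new $8^-$-cycle and no new triangle, and consistency on closed $3$-walks is preserved; Lemma~\ref{counter}(f) is invoked first to pick the pair so that no edge inside $S$ is created. The extension of $\phi_0$ to the identified graph then satisfies $\phi(u_0)=\phi(u_2)=a_2$. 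Coloring $v_4$ and then $v_3$ greedily forces $\phi(v_3)\ne a_2=\phi(u_0)$, so in your notation either $a_2=a_3$ (and $|L'(v_4)|=2$) or $a_0=a_2\ne a_3$; both land in your easy cases, and the remaining two-vertex problem on $v_1,v_2$ is always solvable. Without this identification step your argument cannot be completed.
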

\begin{proof}
Let $v_1v_2v_3v_4$ be a tetrad in $G$. Let $N(v_1)=\{x,v_2,v_1'\}$ and $N(v_4)=\{y,v_3,v_4'\}$, where $v_1', v_4'$ are not in the tetrad. Since $G$ has no adjacent $8^-$-cycles, the vertices in $\{v_1, v_2, v_3, v_4, v_1', v_4', x, y\}$ are distinct. Suppose otherwise that none of the vertices in the tetrad is in $S$.

Since two edges of the path $xv_2v_3y$ are contained in two triangles,
 at least one of $x$ and $y$ is not in $S$ by Lemma~\ref{counter}(f). Applying Lemma~\ref{counter}(f) to paths $xv_1v_1'$ and  $yv_4v_4'$, respectively,  $|\{v_1',x\}\cap S|\leq 1$ and  $|\{v_4',y\}\cap S|\leq 1$. By symmetry, we may assume that
 \begin{equation}
 \mbox{ $y\notin S$ and either $v_1'\notin S$ or $y$ has no neighbors in $S$.}
\end{equation}
By Lemmas~\ref{matching} and ~\ref{333+face},  all edges incident with the vertices in the tetrad are full. By Lemma~\ref{straight}, they are straight. Let $G'$ be the graph obtained by identifying $y$ and $v_1'$ of $G-\{v_1,v_2,v_3,v_4\}$ and let $C'$ be the restriction of $C$ to $E(G')$. By our assumption (4), the identification does not create an edge between vertices of $S$, and thus $\phi_0$ is also a $C'$-coloring of the subgraph of $G'$ induced by $S$. If there were a path $Q$ of length at most 8 between $y$ and $v_1'$, then $G$ would have a cycle $K$ of length at most 12 which is obtained from the path $yv_3v_2v_1v_1'$  and $Q$. Note that $K$ is a separating cycle, contrary to Lemma~\ref{counter}(d). Thus, no new $8^-$-cycles are created in $G'$. This implies that $G'$ contains no adjacent cycles of length at most $8$. By the argument above, the identifying $y$ and $ v_1'$ does not create any new triangle. Thus, $C'$ is consistent on all closed walks of length three in $G'$. Since $|V(G')|<|V(G)|$,  $\phi_0$ can be extended to  a $C'$-coloring $\phi$ of $G'$  by (1). We extend $\phi$ to a $C$-coloring of $G$ by coloring $v_1'$ and $y$ with the color of the identifying vertex and then color $v_4$ and $v_3$ in order with colors distinct from the colors used by their neighbors. Finally, we  color $v_1$ and $v_2$ as follows. Note that $v_1'$ and $v_3$ have different colors and all edges incident with $v_1$ and $v_2$ are straight and full. If the color of $x$ is the same  color of $v_1'$, color $v_2$ and then color $v_1$; if the color of $x$ is the same color of $v_3$, color $v_1$ and then color $v_2$; if the colors of $v_3, x$ and $v_1'$ are distinct, assign the colors of $v_3$ and $v_1'$ to $v_1$ and $v_2$, respectively,  a contradiction.
\end{proof}

\medskip

From now on,  let $F_k=\{f: \text{ $f$ is a $k$-face and } b(f)\cap D=\emptyset\}$,  $F_k'=\{f:  \text{ $f$ is a $k$-face and } |b(f)\cap D|=1\}$.
A $k$-face $f$ is {\em special} if $f\in F_k'$, where $3\leq k\leq 8$. An internal $3$-vertex   is {\em bad} if it is incident with a 3-face $f\notin F_3'$, {\em light} if it is either incident with a 3-face $f\in F_3'$ or with a 4-face from $F_4$ or 5-face from  $F_5$, {\em good} if it is neither bad nor light.

\begin{cor}\label{cor}
No face of $G$ is incident with five consecutive bad vertices. Furthermore, if a face of $G$ is incident with consecutive vertices $v_0,v_1,\ldots,v_5$ and the vertices $v_1,\ldots,v_4$ are bad, then the edges $v_0v_1,v_2v_3$ and $v_4v_5$ are incident with triangles.
\end{cor}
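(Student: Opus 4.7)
The plan is to encode the bad-vertex condition as constraints on which boundary edges of the face $f$ are incident to triangles on the opposite side of $f$, prove a single local structural obstruction coming from the hypothesis that $G$ has no adjacent cycles of length at most $8$, and finish with a short case analysis invoking Lemma~\ref{tetrad}.

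Write $\ldots,v_0,v_1,v_2,\ldots$ for a block of consecutive vertices on the boundary of the face $f$ in question, and set $T_i=1$ if $v_iv_{i+1}$ lies on a triangle on the opposite side of $f$ (and $T_i=0$ otherwise). In either case to be treated $f$ contains at least five consecutive vertices, so $f$ is not itself a triangle; consequently the $3$-face witnessing the badness of any bad $v_j$ must share exactly one of the two $f$-edges incident to $v_j$, giving the constraint $T_{j-1}+T_j\ge 1$ whenever $v_j$ is bad.

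The main structural fact I will establish is that $T_j=T_{j+1}=1$ is impossible whenever $v_{j+1}$ is an internal $3$-vertex. Letting $u$ be the unique third neighbor of $v_{j+1}$, a local planarity check at the degree-$3$ vertex $v_{j+1}$ shows that the face opposite $f$ across the edge $v_jv_{j+1}$ is bounded at $v_{j+1}$ by $v_jv_{j+1}$ and $v_{j+1}u$, and similarly the face opposite $f$ across $v_{j+1}v_{j+2}$ is bounded at $v_{j+1}$ by $v_{j+1}u$ and $v_{j+1}v_{j+2}$. If both of these faces are triangles, then each has apex $u$, so $v_j$ and $v_{j+2}$ are both adjacent to $u$. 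Then $v_jv_{j+1}u$ is a $3$-cycle and $v_jv_{j+1}v_{j+2}u$ is a $4$-cycle sharing the edge $v_jv_{j+1}$, producing adjacent cycles of lengths $3$ and $4$, contradicting the hypothesis on $G$.

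With this obstruction in hand the corollary follows by a short enumeration. Suppose $v_1,\ldots,v_5$ are five consecutive bad vertices on $f$. If $T_2=0$, badness of $v_2$ and $v_3$ forces $T_1=T_3=1$, so $v_1v_2v_3v_4$ is a tetrad of internal $3$-vertices, contradicting Lemma~\ref{tetrad}; if $T_2=1$, the structural fact applied at $v_2$ and $v_3$ gives $T_1=T_3=0$, badness of $v_4$ then forces $T_4=1$, and $v_2v_3v_4v_5$ is an internal tetrad, again contradicting Lemma~\ref{tetrad}. For the second claim, with $v_1,\ldots,v_4$ bad, the case $T_2=0$ produces the forbidden tetrad $v_1v_2v_3v_4$, so $T_2=1$; the structural fact yields $T_1=T_3=0$, and badness of $v_1$ and $v_4$ forces $T_0=1$ and $T_4=1$, which is exactly the statement that $v_0v_1$, $v_2v_3$, $v_4v_5$ are incident to triangles. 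The only nontrivial step is the structural obstruction; everything else is bookkeeping.
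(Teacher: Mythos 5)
Your proposal is correct and follows essentially the same route as the paper: both arguments rest on the observations that a bad $3$-vertex has exactly one of its two boundary edges on its witnessing $3$-face, that an internal $3$-vertex cannot have both boundary edges on triangles (since the two triangles would form adjacent $8^-$-cycles through its third edge), and that the resulting configuration forces a tetrad contradicting Lemma~\ref{tetrad}. Your $T_i$ bookkeeping merely systematizes the paper's case analysis, which starts from the middle bad vertex and propagates the same constraints outward.
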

\begin{proof}
Suppose otherwise that $f$ is incident with five consecutive bad vertices $v_1,\ldots,v_5$. Since $v_3$ is on a $3$-face and $G$ contains no adjacent $3$-faces, either $v_2v_3$ or $v_3v_4$ is an edge on a $3$-face. In the former case, $v_4v_5$ is an edge on a 3-face; in the latter case,  $v_1v_2$ must be an edge of a $3$-face. This implies that either $v_2 v_3v_4v_5$ or $v_1v_2v_3v_4$ is a tetrad. On the other hand, none of $v_1, \ldots, v_5$ is in $S$ since each of them is a bad vertex. This contradicts  Lemma~\ref{tetrad}.

Similarly, if a face of $G$ is incident with consecutive vertices $v_0,v_1,\ldots,v_5$ and the vertices $v_1,\ldots,v_4$ are bad, then either each of $v_1v_2,v_3v_4$ is an edge of a 3-face or each of $v_0v_1,v_2v_3,v_4v_5$ is an edge incident with a triangle. In  the former case, the path $v_1v_2v_3v_4$ is a tetrad and each of $v_1,\ldots, v_4$ is a bad vertex, contrary to Lemma~\ref{tetrad}.
\end{proof}

\medskip

The following two lemmas play key roles in the proof of  Theorem~\ref{main3}.

\begin{lemma}\label{counternew}
Each internal $4$-face is incident with at most two  vertices of the outer face $D$. Furthermore, if it is indeed incident with two vertices $u,v$ of $D$, then $uv\in E(D)$.
\end{lemma}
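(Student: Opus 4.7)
The plan is to split on $k := |V(f) \cap V(D)|$, where $f = [v_1v_2v_3v_4]$ is the internal $4$-face. The conclusion rules out $k \in \{3, 4\}$ and the sub-case of $k = 2$ in which the two vertices of $f$ lying on $D$ are non-adjacent on $D$; I handle these three forbidden configurations in turn, each time reaching a contradiction.

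The cases $k = 4$ and $k = 2$ are quickly dispatched. For $k = 4$: since $D$ is an induced cycle by Lemma~\ref{counter}(e), every edge of $f$ lies in $E(D)$, so the boundary of $f$ is a subcycle of $D$; this forces $f = D$, contradicting that $f$ is internal (note $V(G) \ne V(D)$ by Lemma~\ref{counter}(a)). For $k = 2$ with the two $D$-vertices $u, v$ consecutive on $f$: the edge $uv$ then lies in $E(G)$, and induced-ness of $D$ gives $uv \in E(D)$, which is exactly what the lemma asserts. For $k = 2$ with $v_1, v_3 \in V(D)$ opposite on $f$ (and $v_2, v_4 \notin V(D)$): let $P$ be the shorter of the two arcs of $D$ between $v_1$ and $v_3$, of length at most $\lfloor |D|/2 \rfloor \le 6$. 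Then $K_1 := v_1v_2v_3 \cup P$ is a simple cycle (since $v_2 \notin V(P) \subseteq V(D)$) of length at most $8$, sharing the edges $v_1v_2$ and $v_2v_3$ with $f$. Thus $K_1$ and $f$ are two distinct adjacent cycles both of length at most $8$, contradicting the hypothesis on $G$.

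The main work lies in Case $k = 3$. Say $v_1, v_2, v_3 \in V(D)$ and $v_4 \notin V(D)$. Induced-ness of $D$ gives $v_1v_2, v_2v_3 \in E(D)$, so $v_1v_2v_3$ is a subpath of $D$; let $P_2$ be the complementary arc, and set $K := v_1v_4v_3 \cup P_2$, a simple cycle of length $|D| \le 12$. A local analysis of the embedding at $v_2$ shows that the $4$-face $f$ occupies the entire inner side of $v_2$ while $D$ occupies its outer side, whence $d(v_2) = 2$ with $N(v_2) = \{v_1, v_3\}$, and in particular $v_2 \in ext(K)$. If $int(K) \ne \emptyset$, then $K$ is a separating cycle of length in $[3, 12]$, contradicting Lemma~\ref{counter}(d). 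Otherwise $int(K) = \emptyset$, and any neighbor of $v_4$ other than $v_1, v_3$ must lie on $V(P_2) \setminus \{v_1, v_3\}$ (since $v_4v_2 \notin E(G)$, given $d(v_2) = 2$). Lemma~\ref{counter}(c) forces $d(v_4) \ge 3$, so such a neighbor $u$ exists; the chord $v_4u$ splits the inside of $K$ into two cycles $C_1, C_2$ with $|C_1| + |C_2| = |K| + 2 \le 14$, so the smaller has length at most $7$ and shares either $v_1v_4$ or $v_3v_4$ with $f$. Again we obtain two adjacent cycles of length at most $8$---the final contradiction.

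The main obstacle is the $k = 3$ case with $int(K) = \emptyset$: closing it requires combining planarity (to deduce $d(v_2) = 2$ and thereby rule out $v_4v_2 \in E(G)$), Lemma~\ref{counter}(c) (to force a third neighbor of $v_4$), and the no-adjacent-short-cycles hypothesis (to exploit the resulting chord of $K$). The other cases are resolved by a single shortest-arc adjacent-cycles argument combined with the induced-cycle property of $D$.
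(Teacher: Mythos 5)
Your proof is correct and follows essentially the same route as the paper's: the induced-cycle property of $D$ forces the middle common vertex to be a $2$-vertex, and your dichotomy on whether $int(K)$ is empty (yielding either a separating $12^-$-cycle forbidden by Lemma~\ref{counter}(d), or a chord from $v_4$ to the long arc that creates two adjacent $8^-$-cycles) matches the paper's case split on whether the internal vertex has an extra neighbour on $D$. The non-consecutive two-vertex case is likewise handled as in the paper, via the short arc of $D$ producing an $8^-$-cycle adjacent to $f$.
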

\begin{proof}
Suppose otherwise that an internal $4$-face $f=[uvwx]$  is incident with at least three vertices of the outer face $D$. By Lemma~\ref{counter}(e), $D$ has no chords. Thus,  $f$ and $D$ have exactly three common vertices and share a common $2$-vertex $u$ and $w$ is an internal vertex.  By Lemma~\ref{counter}(c) $d(w)\ge3$. Suppose that $w$ has a neighbor on $D$ other than $v$ and $x$. Let $f_1$ and $f_2$ be two cycles sharing $wx$ and $wv$ with $f$, respectively.    Since $D$ is a $k$-face with $9\le k\le12$, at least one of $f_1$ and $f_2$ is a $7^-$-face which is adjacent to $f$, contrary to our assumption that  $G$ contains no adjacent cycles of length at most $8$. Thus, $w$ has no other neighbors on $D$ other than $v$ and $x$.  Let $P$ be the longer path on $D$ joining $v$ and $x$. Then $xPvwx$ is a separating $12^-$-cycle in $G$, contrary to Lemma~\ref{counter}(d).

Next, suppose otherwise that $f$ and $D$ share exactly two non-consecutive vertices. Since $D$ is a $12^-$-face, $f$ must be adjacent a  $8^-$-cycle in $G$, a contradiction.
\end{proof}

\medskip

 A $9$-face $f=[v_1v_2\ldots v_9]$ in $F_9$ is {\em special} if each vertex in $\{v_1,v_2\}$ is a light 3-vertex and each vertex in $\{v_3,v_4,v_5,v_7,v_8,v_9\}$ is a bad $3$-vertex.

\begin{lemma}\label{special}
$G$ does not contain any special $9$-face.

\end{lemma}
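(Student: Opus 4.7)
The plan is to argue by contradiction. Suppose $f=[v_1v_2\cdots v_9]$ is a special $9$-face. My first step is to pin down the geometry of the triangles attached to the six bad vertices. Since $f$ is a $9$-face and $G$ has no adjacent cycles of length at most $8$, any triangle incident to a bad vertex $v_i$ cannot share an edge with another cycle of length at most $8$ other than $f$ itself, and in particular no two such attached triangles share an edge. A short case analysis forces each such triangle to use exactly one of the two $f$-edges at $v_i$ (the option of using both $f$-edges would create a chord of $f$, which is ruled out by Lemma~\ref{counter}(d)), and places the attached triangles in an essentially unique alternating pattern along the two bad arcs $v_3v_4v_5$ and $v_7v_8v_9$. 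Corollary~\ref{cor} and Lemma~\ref{tetrad} eliminate the degenerate cases where the triangles produce four consecutive bad vertices or a tetrad; by the definition of a light vertex, a similar but easier analysis controls the $3$-, $4$- or $5$-face attached at $v_1$ and $v_2$.

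The next step is the standard identification-and-extension argument. By Lemma~\ref{matching} and Lemma~\ref{333+face}, every edge incident to the $v_i$ is full, and by Lemma~\ref{straight} I may assume all of these edges are straight in $C$. I then construct a reduced plane graph $G'$ by identifying two carefully chosen off-face neighbors lying on opposite sides of $f$ — one on the light side near $\{v_1,v_2\}$ and one on a bad side near $\{v_3,v_4,v_5\}$ or $\{v_7,v_8,v_9\}$. This identification must satisfy three properties: it should not merge two vertices of $S$ (which is ensured by the light/bad definitions together with Lemma~\ref{counter}(e) and (f)); it should create no new cycle of length at most $8$, which follows from Lemma~\ref{counter}(d) because any short connecting path in $G$ between the two identified vertices would close with a path through $f$ to form a separating cycle of length at most $12$; and it should create no new triangle, so that the restricted correspondence assignment $C'$ on $G'$ remains consistent on every closed $3$-walk.

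Once the identification is valid, the minimality condition~(1) applied to $G'$ produces a $C'$-coloring of $G'$ extending $\phi_0$, and pulling back gives a partial $C$-coloring of $G$ defined on every vertex outside a small patch around $f$. I complete the proof by colouring the nine vertices of $f$, together with their off-face neighbours, in a sweep that exploits the straightness and fullness of the surrounding edges: each $3$-vertex on $f$ has at most two forbidden colours when reached, and either the attached $3$-face at a bad vertex or the attached $4$/$5$-face at a light vertex supplies the extra slack needed to keep at least one colour free. The main obstacle I expect is the bookkeeping for the identification: checking that one of the candidate pairs on opposite sides of $f$ always avoids simultaneously creating a chord of $D$, a new triangle, and a separating cycle of length at most $12$. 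The asymmetry built into the definition of a special face — two light vertices on one arc, three-plus-three bad vertices on the other — is exactly the flexibility I would use to produce such a pair.
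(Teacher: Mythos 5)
Your overall strategy is the paper's: rule out a triangle on $v_1v_2$ via Lemma~\ref{tetrad}, make the relevant edges full and straight via Lemmas~\ref{matching}, \ref{333+face} and \ref{straight}, delete a few vertices of $f$, identify two vertices, invoke minimality, and extend greedily. But there is a concrete gap at exactly the two points you flag as the ``main obstacle.'' The paper does \emph{not} identify two off-face neighbours on opposite sides of $f$: it deletes $\{v_1,v_2,v_3,v_4\}$ and identifies the off-face neighbour $v_2'$ of the light vertex $v_2$ with the \emph{on-face} bad vertex $v_5$. This choice is not cosmetic. After pulling back the $C'$-colouring one colours $v_1$, then $v_2$, and is left with the triangle on $v_3v_4$ with apex $v_{34}$; whichever of $v_3,v_4$ is coloured last has \emph{three} coloured neighbours, so your claim that ``each $3$-vertex on $f$ has at most two forbidden colours when reached'' fails precisely there. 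The escape is that the identification forces $\phi(v_2)\ne\phi(v_5)$ (because $v_2$ must avoid the colour of $v_2'$, which equals that of $v_5$), and then a three-way case analysis on the colour of $v_{34}$ relative to $\phi(v_2)$ and $\phi(v_5)$ --- choosing the order in which $v_3$ and $v_4$ are coloured, or assigning them the colours of $v_5$ and $v_2$ --- always succeeds, all of this being meaningful because the edges involved are straight and full.

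If instead you identify two genuinely off-face neighbours (say $v_2'$ with $v_{34}$, or $v_2'$ with an off-face neighbour of one of $v_7,v_8,v_9$), the last vertex of that triangle can see three distinctly coloured neighbours and the sweep stalls; so the ``careful choice'' must be made explicit: the identified pair has to be chosen so that the two already-coloured outer neighbours of the final uncoloured triangle are guaranteed distinct colours. With that correction the rest of your outline matches the paper: no edge inside $S$ is created because $f\in F_9$ has only internal vertices, no new $8^-$-cycle arises by Lemma~\ref{counter}(d) applied to the length-$4$ path $v_2'v_2v_3v_4v_5$ closed by any short path $Q$, hence no new triangle appears and $C'$ remains consistent on closed $3$-walks, and minimality condition (1) supplies the $C'$-colouring of $G'$.
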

\begin{proof}
 Suppose otherwise that $f=[v_1v_2\ldots v_9]$ is a special $9$-face in $G$. We claim that $v_1$ and $v_2$ are not on a 3-face. Suppose otherwise. Since each of $\{v_3,v_4,v_5,v_7,v_8,v_9\}$ is a bad $3$-vertex, $v_3$ and $v_4$ are on a 3-face, $v_8$ and $v_9$ are on a 3-face, $v_5$ and $v_6$ are on a 3-face and $v_6$ and $v_7$ are on a 3-face. This implies that the subgraph induced by $v_1, v_2, v_3$ and $v_4$ is a tetrad.
By Lemma~\ref{tetrad}, one of $v_1, v_2, v_3$ and $v_4$ is in $D$, contrary to our assumption that $f$ is a special 9-face. Thus,  $v_1$ and $v_2$ must be on a truly internal $4$-or $5$-face, and each of $\{v_3v_4, v_5v_6, v_6v_7, v_8v_9\}$ is on a $3$-face. Let $v_2'$ be the neighbor of $v_2$ not on $f$.

By Lemmas~\ref{matching} and ~\ref{333+face},  all edges incident with $v_1, v_2,v_3$ and $v_4$ are full. By Lemma~\ref{straight}, they are straight. Let $G'$ be the graph by identifying $v_2'$ and $v_5$ of $G-\{v_1,v_2,v_3,v_4\}$ and let $C'$ be the restriction of $C$ to $E(G')$. Note that all the neighbors of each vertex on $f$ are internal. So the identification does not create an edge between vertices of $S$, and thus $\phi_0$ is a $C'$-coloring of the subgraph of $G'$ induced by $S$. If there were a path $Q$ of length at most 8 between $v_2'$ and $v_5$, then $G$ would have a cycle $K$ of length at most 12 which is obtained from the path $v_2'v_2v_3v_4v_5$  and $Q$. Note that $D$ is a separating cycle, contrary to Lemma~\ref{counter}(d). Thus, no new $8^-$-cycles are created in $G'$. This implies that $G'$ contains no adjacent cycles of length at most $8$ and $C'$ is consistent on all closed walks of length three in $G'$. Since $|V(G')|<|V(G)|$,  $\phi_0$ can be extended to  a $C'$-coloring $\phi$ of $G'$  by (1). We extend $\phi$ to a $C$-coloring of $G$ by coloring $v_2'$ and $v_5$ with the color of the identifying vertex and then coloring $v_1$ and $v_2$ in order with colors distinct from the colors used by their neighbors. Finally, we can color $v_3$ and $v_4$. Note that $v_2$ and $v_5$ have different colors and all edges incident with $v_3$ and $v_4$ are straight and full. Let $v_{34}$ be the common neighbor of $v_3$ and $v_4$. If the color of $v_{34}$ is the same  color of $v_2$, color $v_4$ and then color $v_3$; if the color of $v_{34}$ is the same color of $v_5$, color $v_3$ and then color $v_4$; if the colors of $v_{34}, v_2$ and $v_5$ are distinct, assign the colors of $v_2$ and $v_5$ to $v_4$ and $v_3$, respectively,  a contradiction.
\end{proof}

\medskip


We are now ready to present a discharging procedure that will complete the proof of the Theorem~\ref{main3}.  Let each vertex $v\in V(G)$ have an initial charge of $\mu(v)=d(v)-4$, and each face $f\not=D$ in our fixed plane drawing of $G$ have an initial charge of $\mu(f)=d(f)-4$. Let $\mu(D)=d(D)+4$. By Euler's Formula, $\sum_{x\in V\cup F}\mu(x)=0$.

Let $\mu^*(x)$ be the charge of $x\in V\cup F$ after the discharge procedure. To lead to a contradiction, we shall prove that $\mu^*(x)\ge 0$ for all $x\in V\cup F$ and $\mu^*(D)$ is positive.

\noindent The discharging rules:

\begin{enumerate}[(R1)]
\item \label{3f} Each non-special $3$-face receives $\frac{1}{3}$ from each incident internal vertex.

\item \label{2vertex} Each $2$-vertex receives $\frac{2}{3}$ from each incident internal face.

\item\label{3vertex} Each internal $3$-vertex receives $\frac{2}{3}$ and $\frac{1}{2}$ from each incident $9^+$-face if it is bad and  light $3$-vertex, respectively; each good $3$-vertex receives $\frac{1}{3}$ from each incident face.

\item \label{4vertex} Each internal $4$-vertex $v$ receives $\frac{1}{3}$ from the incident $9^+$-face  which is not adjacent to the non-special 3-face if $v$ is incident with one non-special $3$-face and three $9^+$-faces, $\frac{1}{3}$ from each incident $9^+$-face if $v$ is incident with  two non-special $3$-faces and two $9^+$ faces, $\frac{1}{6}$ from each incident  $9^+$-face if $v$ is incident with one non-special $3$-face, one $8^-$-face and two $9^+$-faces.

\item \label{innerface} After (R\ref{3f}) to (R\ref{4vertex}) each face sends its remaining positive charge to the outer face $D$.

\item \label{verticesonD} Each $3^+$-vertex on $D$ sends $1$ to each incident special  $8^-$-face, $\frac{1}{3}$ to each other incident $8^-$-face other than $D$.

\item \label{outerface}The outer face D sends $\frac{4}{3}$ to each incident $2$-vertex or $3$-vertex  incident with an internal  $8^-$-face, and $1$ to each other incident vertex.

\end{enumerate}

\begin{lemma}\label{VERTEXCHECK}
Every vertex $v$ in $G$ has nonnegative final charge.
\end{lemma}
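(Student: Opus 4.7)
The plan is to verify $\mu^*(v)\ge 0$ by a case analysis on $d(v)$ and on whether $v\in V(D)$. Three structural observations, all drawn from Lemma~\ref{counter}, will guide every computation: (a) every $2$-vertex lies on $D$ by Lemma~\ref{counter}(c); (b) since $G$ is $2$-connected and contains no adjacent $8^-$-cycles, no two consecutive faces at a vertex can both be $8^-$-faces; and (c) since $D$ is induced by Lemma~\ref{counter}(e), every internal face incident with a $D$-vertex contains a $D$-neighbour of that vertex, so it shares at least two vertices with $D$ and cannot be special. A first consequence is that at an internal $3$-vertex the three faces are pairwise adjacent at $v$, so at most one of them is $8^-$.

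For vertices on $D$, I would proceed by degree. A $2$-vertex has a unique internal face by (a), and (R\ref{2vertex}) together with (R\ref{outerface}) restores the initial $-2$ to $0$. A $3$-vertex has two internal faces sharing its unique interior edge, so by (b) at most one is an $8^-$-face and by (c) it is not special; then (R\ref{verticesonD}) costs at most $\tfrac13$ and (R\ref{outerface}) contributes $\tfrac43$ (or $1$ when there is no internal $8^-$-face), balancing exactly to $0$. For $k\ge 4$ the vertex has initial charge $k-4$, receives $1$ from $D$ via (R\ref{outerface}), and sends $1$ per incident special $8^-$-face and $\tfrac13$ per incident non-special $8^-$-face. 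Since (b) bounds the total number of incident $8^-$-faces by $\lceil(k-1)/2\rceil$ and (c) bounds the number of special ones by $k-3$, the inequality $\lceil(k-1)/2\rceil\le k-3$ closes the case for $k\ge 5$.

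For internal vertices I again split by degree. An internal bad $3$-vertex loses $\tfrac13$ via (R\ref{3f}) and, by the first-paragraph consequence, gains $2\cdot\tfrac23$ from two $9^+$-faces via (R\ref{3vertex}); a light $3$-vertex gains $2\cdot\tfrac12$ from two $9^+$-faces; a good $3$-vertex sends nothing and gains $3\cdot\tfrac13$ from all three incident faces. For an internal $4$-vertex, (b) bounds the incident $8^-$-faces by $2$ non-consecutively, so the three configurations listed in (R\ref{4vertex}) exhaust the possibilities where a non-special $3$-face is incident; when none is incident, neither (R\ref{3f}) nor (R\ref{4vertex}) affects $v$ and $\mu^*(v)=0$. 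Finally an internal $k$-vertex with $k\ge 5$ begins with $k-4\ge 1$ and loses only $\tfrac13$ per non-special $3$-face via (R\ref{3f}); since (b) allows at most $\lfloor k/2\rfloor$ such faces and $k-4\ge \tfrac{k}{6}$ holds for $k\ge 5$, the result follows.

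The main obstacle I anticipate is the $k=4$ on-$D$ case, where the naive bound $1+\tfrac13$ on outgoing charge exceeds the $1$ received from $D$; the sharper consecutive-faces argument from (b) is needed to rule out simultaneously having a special middle $8^-$-face and an outer $8^-$-face at $v$. Elsewhere the arithmetic is a direct application of the discharging rules once (a)--(c) have pinned down the types of the neighbouring faces.
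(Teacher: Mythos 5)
Your proof is correct and follows essentially the same case analysis as the paper's: the same fan argument bounding the number of incident internal $8^-$-faces, the same observation that the two internal faces flanking the $D$-edges at a boundary vertex cannot be special (so a special $8^-$-face at a $4$-vertex on $D$ must be the middle face, forcing its neighbours to be $9^+$), and the same balance computations for internal $3$-, $4$-, and $5^+$-vertices. One caveat: your observation (c) is overstated --- it is \emph{not} true that every internal face at a $D$-vertex contains a $D$-neighbour of that vertex (the middle faces of the fan need not, which is exactly why special faces exist and why rule (R\ref{verticesonD}) distinguishes them); your later uses of (c), namely non-specialness for $d(v)=3$ and the bound of $k-3$ on the number of special faces for $d(v)=k\ge 4$, rely only on the correct weaker statement, so the argument itself is unharmed.
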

\begin{proof}
We first consider the final charge of the vertices on $D$. If $d(v)=2$, then  $v$ receives $\frac{2}{3}$ from the incident internal face and $\frac{4}{3}$ from $D$ by (R\ref{2vertex}) and (R\ref{outerface}). So $\mu^*(v)=-2+\frac{2}{3}+\frac{4}{3}=0$. Let $d(v)=3$. By our assumption, $v$ is at most incident with one non-special internal $8^-$-face. If $v$ is incident with an internal $8^-$-face, then  $v$ receives $\frac{4}{3}$ from $D$ and sends out $\frac{1}{3}$ by (R6) and (R7). If $v$ is not incident with any internal $8^-$-face, then $v$ receives $1$ from $D$ by  (R\ref{outerface}). Thus,  $\mu^*(v)=-1+min\{\frac{4}{3}-\frac{1}{3}, 1\}=0$.
If $d(v)=4$, then $v$ gives $1$ to each incident special $8^-$-face, $\frac{1}{3}$ to each non-special $8^-$-face and receives $1$ from $D$ by (R\ref{verticesonD}) and (R\ref{outerface}). So $\mu^*(v)\ge4-4+1-max\{1,\frac{1}{3}\times2\}=0$.
If $d(v)\ge5$, then $v$ gives at most $1$ to each of the $\lfloor \frac{d(v)}{2}\rfloor$ incident internal $8^-$-faces and receives $1$ from $D$ by (R\ref{verticesonD}) and (R\ref{outerface}). So $\mu^*(v)\ge d(v)-4+1-\lfloor \frac{d(v)}{2}\rfloor\ge0$.

Now we consider the final charge of the vertices not on $D$. By Lemma~\ref{counter}(c) $d(v)\ge3$. If $d(v)=3$, then $v$ is incident with at least two $9^+$-faces. If $v$ is not incident with any 3-face, then $v$ gets $\frac{1}{3}$ from each incident face by (R3). If $v$ is incident non-special 3-face, then $v$ receives either $\frac{2}{3}$ from each $9^+$-face and sends $\frac{1}{3}$ to the non-special 3-face by (R3) and (R1). If $v$ is incident with special 3-face, then $v$ receives $\frac{1}{2}$ from each incident $9^+$-face by (R3).  Thus, $\mu^*(v)=-1+min\{\frac{2}{3}\times2-\frac{1}{3}, \frac{1}{2}\times2, \frac{1}{3}\times3\}=0$. If $d(v)=4$, then let $f_i$  be the four incident faces of $v$ in clockwise order, where $1\le i\le4$. If $v$ is incident with no non-special 3-faces, then $\mu^*(v)=\mu(v)=0$ by (R4). If $v$ is incident with exactly one non-special 3-face, say $f_1$, then by (R\ref{4vertex}) $v$ receives either  $\frac{1}{6}$ from each of $f_2$ and $f_4$ if $f_3$ is a $8^-$-face or $\frac{1}{3}$ from $f_3$ if $f_3$ is a $9^+$-face. If $v$ is incident with two non-special 3-faces, say $f_1$ and $f_3$, then $v$ receives $\frac{1}{3}$ from each of $f_2$ and $f_4$ by (R\ref{4vertex}). By (R\ref{3f}) $v$ gives $\frac{1}{3}$ to each incident non-special $3$-face. Thus, $\mu^*(v)=4-4+min\{\frac{1}{6}\times2-\frac{1}{3}, \frac{1}{3}-\frac{1}{3}, \frac{1}{3}\times2-\frac{1}{3}\times2\}=0$. If $d(v)\ge5$, then  $v$ gives at most  $\frac{1}{3}$ to each incident $3$-face by (R\ref{3f}). Thus, $\mu^*(v)\ge d(v)-4-\frac{1}{3}\times\lfloor \frac{d(v)}{2}\rfloor>0$.
\end{proof}

\begin{lemma}\label{FACECHECK}
Each face $f$ other than $D$ has nonnegative final charge.
\end{lemma}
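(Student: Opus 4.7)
The plan is to verify, for each non-outer face $f$, the pre-(R\ref{innerface}) balance
\[
\mu(f) + (\text{R}\ref{3f}, \text{R}\ref{verticesonD})\text{-in} \;\ge\; (\text{R}\ref{2vertex}, \text{R}\ref{3vertex}, \text{R}\ref{4vertex})\text{-out},
\]
since (R\ref{innerface}) then sends any non-negative surplus to $D$, leaving $\mu^*(f) \ge 0$. The natural case split is by the length $k = d(f)$ and, within each case, by the number $s = |b(f) \cap D|$ of outer vertices on $f$.

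For $k = 3$ the initial deficit $\mu(f) = -1$ is handled entirely by (R\ref{3f}) and (R\ref{verticesonD}). When $s = 0$ the face is non-special and receives $3 \cdot \tfrac13 = 1$ from its three internal vertices via (R\ref{3f}); when $s = 1$ the face is special and receives $1$ from the outer $3^+$-vertex via (R\ref{verticesonD}); when $s = 2$ the two outer vertices must be consecutive on $D$ by Lemma~\ref{counter}(e), a short argument shows neither of them is a $2$-vertex, and the face receives $\tfrac13$ from the lone internal vertex via (R\ref{3f}) plus $\tfrac13$ from each outer vertex via (R\ref{verticesonD}); the case $s = 3$ is ruled out by the induced-ness of $D$. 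Each subcase sums to exactly $1$.

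For $4 \le k \le 8$ the initial charge is already non-negative. A short preliminary remark is needed: no $2$-vertex is incident with an internal $4$-face (its two $D$-neighbours would force a third outer vertex on $f$, contradicting Lemma~\ref{counternew}), and on a $k$-face with $5 \le k \le 8$ the (R\ref{2vertex}) payments to a $2$-vertex are offset by the (R\ref{verticesonD}) contributions of its two $D$-neighbours also incident with $f$. The outgoing under (R\ref{3vertex}) applies only to good $3$-vertices at rate $\tfrac13$, because a $3$-vertex sitting on an $F_4$- or $F_5$-face is light and does not receive from $f$ via this rule; a case-by-case tally by $s$ finishes the estimate.

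The main obstacle is $k \ge 9$. Writing $b$, $l$, $g$ for the numbers of bad, light, good $3$-vertices on $f$ and $h$ for the total (R\ref{4vertex}) outgoing, the pre-(R\ref{innerface}) outgoing is
\[
\tfrac23 b + \tfrac12 l + \tfrac13 g + h,
\]
which must be bounded by $k - 4$ plus the (R\ref{verticesonD}) input minus the (R\ref{2vertex}) payments. For $k \ge 12$ the crude bound $\tfrac23 k \le k - 4$ already works, and for $10 \le k \le 11$ Corollary~\ref{cor} restricts the runs of bad vertices enough to close the estimate. The truly delicate case is $k = 9$ with $f \in F_9$, where $\mu(f) = 5$ is tight: the worst arrangement $b = 6$, $l = 2$, $g = 0$, $h = 0$ gives outgoing exactly $5$, and this is precisely the special $9$-face that Lemma~\ref{special} forbids. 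The plan is then to enumerate the cyclic patterns on $v_1 \ldots v_9$ whose outgoing would equal or exceed $5$ and rule each one out using Corollary~\ref{cor} (no five consecutive bad vertices, with the prescribed triangle-incidences of $v_0 v_1$, $v_2 v_3$, $v_4 v_5$ whenever four consecutive bad vertices appear), the light-vertex definition (a light $3$-vertex forces an $F_3'$-, $F_4$- or $F_5$-face adjacent to $f$ along one of its incident edges, which constrains the type of the neighbouring vertex on $f$), and Lemma~\ref{special}. I expect this structural enumeration around a $9$-face to be the technical heart of the lemma.
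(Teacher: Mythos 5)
Your plan follows the paper's proof essentially step for step: the same split by $d(f)$, the same observations that a $3$-face gets exactly $1$ in every configuration, that Lemma~\ref{counternew} controls $4$-faces and that $2$-vertices force compensating $3^+$-vertices of $D$ on $5$- to $8$-faces, the same crude bound for large faces, and the same reliance on Corollary~\ref{cor} and Lemma~\ref{special} for $9$- and $10$-faces. The difficulty is that for $d(f)=9$ (and for $d(f)=10$ with $7$ or $8$ bad vertices) you only announce the enumeration rather than carry it out, and that enumeration is where almost all of the content of this lemma lives. Concretely, the paper still has to argue: that $s=5$ bad vertices force $t\le 3$ light ones (if $t=4$ every vertex of $f$ would be a $3$-vertex and a bad and a light vertex would share a $3$-face); that with $s=5$, $t=3$ the remaining vertex is a $4^+$-vertex on a $3$-face and a $5^-$-face, hence draws only $\tfrac16$ by (R\ref{4vertex}); that with $s=6$ the ``furthermore'' clause of Corollary~\ref{cor} determines which edges lie on triangles and hence which non-bad vertices can be light, with Lemma~\ref{special} eliminating exactly the one surviving pattern; and that with $s=7$ the forced $3{+}4$ run structure produces a $4^+$-vertex that receives nothing. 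None of what you wrote is wrong, and the tools you name are the right ones, but as it stands the decisive case analysis is a promissory note rather than a proof.
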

\begin{proof}
Let $\mu'(f)$ be the charge of $f$ after (R\ref{3f}) to (R\ref{4vertex}). By (R\ref{innerface}) we only need to show that $\mu'(f)\ge0$. Assume that $d(f)=3$. By Lemma~\ref{counter}(e), $f$ is incident with no $2$-vertices. If $f\in F_3'$,  then $f$  receives 1 from the incident vertex  from $D$ by (R\ref{verticesonD}). If $f\notin F_3'$, then $f$ receives $\frac{1}{3}$ from each incident vertex by (R\ref{3f}) and (R\ref{verticesonD}).   Thus, $\mu'(f)=3-4+min\{1,\frac{1}{3}\times3\}=0$.

 Assume that $4\le d(f)\le8$.  Since $G$ contains no adjacent faces of length at most $8$, $f$ is not adjacent to any $3$-faces. Assume first that $b(f)\cap D=\emptyset$. Let $d(f)\in\{4,5\}$. If $f$ sends out the charge,  then it is incident with 3-vertices by (R1) and (R3). If $f$ is incident with an internal 3-vertex $v$, then $v$ is light by the definition, and so $f$ can not send charge to $v$ by (R3).  Thus, $\mu'(f)\ge d(f)-4\ge0$. If $d(f)\in\{6,7,8\}$, then $f$ sends out at most $\frac{1}{3}$ to each incident vertex by (R\ref{3vertex}). So $\mu'(f)\ge d(f)-4-\frac{1}{3}d(f)\ge0$.
 Thus, we may assume that $b(f)\cap D\ne\emptyset$.  By (R\ref{verticesonD}) $f$ receives $1$ from the incident vertex on $D$ if it is special, $\frac{1}{3}$ from each incident $3^+$-vertex on $D$ if it is not special. On the other hand, by (R\ref{3vertex})$f$ sends at most $\frac{1}{3}$ to each incident internal vertex. If $d(f)=4$, then by Lemma~\ref{counternew} $f$ shares either exactly one vertex or one edge with $D$. So  $\mu'(f)\ge4-4+min\{1-\frac{1}{3}\times3, \frac{1}{3}\times2-\frac{1}{3}\times2\}=0$. Let $t$ be the number of $2$-vertices on $f$. Let $5\le d(f)\le8$.
 If $t=0$, then $\mu^*(f)\ge d(f)-4+\min\{1-\frac{1}{3}(d(f)-1), 2\times \frac{1}{3}-\frac{1}{3}(d(f)-2)\}>0$.  If $t\ge1$, then $f$ is incident with at least two $3^+$-vertices from $D$, each of them gives $\frac{1}{3}$ to $f$ by (R\ref{verticesonD}). So $\mu^*(f)\ge d(f)-4+\frac{1}{3}\times2-\frac{2}{3}t- \frac{1}{3}(d(f)-t-2)\ge\frac{1}{3}(d(f)-5)\ge0$ since $t\le d(f)-3$.

 Next assume that $d(f)\ge9$. If $f$ is incident with a $2$-vertex, then $f$ is incident with at least two $3^+$-vertices from $D$. By (R6),  each of these two $3^+$-vertices receives nothing from $f$. By (R2),(R3) and (R4), each 2-vertex on $D$ receives $\frac{2}{3}$ from $f$ and each internal vertex of $f$ receives at most $\frac{2}{3}$ from $f$. Thus, $\mu'(f)\ge d(f)-4-\frac{2}{3}(d(f)-2)=\frac{1}{3}(d(f)-8)>0$. It remains for us to consider  that $f$ is incident with no $2$-vertices.

By Corollary~\ref{cor}, $f$ is incident with at most $(d(f)-2)$ bad 3-vertices. By (R\ref{3vertex}) and (R\ref{4vertex}),  each $9^+$-face sends $\frac{2}{3}$ to each incident bad $3$-vertex, $\frac{1}{2}$ to each incident light $3$-vertex and at most $\frac{1}{3}$ to each other incident vertex.   Let $s$ and $t$ be the number of bad $3$-vertices and light $3$-vertices of $f$, respectively. If $d(f)\ge11$, then $\mu'(f)\ge d(f)-4-\frac{2}{3}(d(f)-2)-\frac{1}{2}\times2\ge0$.

Assume  that $d(f)=10$. If $s\le6$, then $\mu'(f)\ge d(f)-4-\frac{2}{3}\times6-\frac{1}{2}\times4=0$ by (R\ref{3vertex}). Let $s=7$. Since a light 3-vertex is either in 3-face from $F_3'$ or on a 4-face from $F_4$ or a 5-face from $F_5$, $t\leq 2$. Thus, $\mu'(f)\ge10-4-\frac{2}{3}\times7-\frac{1}{2}\times2-\frac{1}{3}=0$ by (R\ref{3vertex}) and (R4).
If $s=8$, then the eight bad 3-vertices must be divided into two parts each of which consists of consecutive 4 bad 3-vertices by the other vertices of $f$ by Corollary~\ref{cor}. Thus, the two non-bad vertices cannot be light $3$-vertices. By (R\ref{3vertex}) and (R4), $\mu'(f)\ge10-4-\frac{2}{3}\times8-\frac{1}{3}\times2=0$.

We are left to consider that $d(f)=9$. If $s\le3$, then $\mu'(f)\ge9-4-\frac{2}{3}\times3-\frac{1}{2}\times6=0$. If $s=4$, then $t\le4$. So $\mu'(f)\ge 9-4-\frac{2}{3}\times4-\frac{1}{2}\times4-\frac{1}{3}=0$.
If $s=5$, then $t\leq 3$. In fact, if $t=4$, then each vertex of $f$ is of degree 3. This implies that a bad 3-vertex and a light 3-vertex are in a 3-face, contrary to the definition of bad 3-vertex. If $t\le2$, then $\mu'(f)\ge9-4-\frac{2}{3}\times5-\frac{1}{2}\times2-\frac{1}{3}\times2=0$ by (R3) and (R4). If $t=3$, then $f$ is incident with a $4^+$-vertex which is on a non-special $3$-face and a $5^-$-face which is not a non-special 3-face, thus gets at most $\frac{1}{6}$ from $f$ by (R\ref{4vertex}). So $\mu'(f)\ge9-4-\frac{2}{3}\times5- \frac{1}{2}\times3-\frac{1}{6}=0$ by (R3) and (R4). Let $s=6$. If any two of the three non-bad vertices on $f$ are not adjacent, then none of them is a light $3$-vertex. So  $\mu'(f)\ge9-4-\frac{2}{3}\times6-\frac{1}{3}\times3=0$ by (R3) and (R4). Let $f=[v_1v_2\ldots v_9]$. So we assume that there are two consecutive non-bad vertices $v_1$ and $v_2$ on $f$. By symmetry and Corollary~\ref{cor}, either $v_7$ or $v_6$ is a non-bad vertex. In the former  case, each of $\{v_2v_3,v_4v_5,v_6v_7\}$ is on a $3$-face by Corollary~\ref{cor}. If $v_8v_9$ is on a $3$-face, then $v_7$ is a not light 3-vertex. In this case, none of $v_1$ and $v_2$ is a light 3-vertex.  By (R3) and (R4), $\mu'(f)\ge9-4-\frac{2}{3}\times6- \frac{1}{3}\times3=0$. Thus, assume that $v_8v_9$ is not an edge of a 3-face. In this case, $v_7v_8$ and $v_1v_9$ are on $3$-faces. This implies that none of $v_1,v_2,v_7$ are light $3$-vertices. By (R3) and (R4), we conclude similarly that $\mu'(f)\ge9-4-\frac{2}{3}\times6- \frac{1}{3}\times3=0$. In the latter case, since each of  $v_5$ and $v_7$ is a bad 3-vertex, $v_6$ cannot be a light $3$-vertex. If none of $v_1$ and $v_2$ is light $3$-vertices, then  $\mu'(f)\ge9-4-\frac{2}{3}\times6- \frac{1}{3}\times3=0$ by (R3) and (R4). If exactly one, say $v_1$, of them is a light $3$-vertex, then $v_1v_2$ is an edge of a 3-face or a 4- or 5-face. Since $v_2$ is not a light 3-vertex, $v_2$ is a $4^+$-vertex. Thus, by (R3) and (R4), $f$ sends at most $\frac{1}{2}$, $\frac{1}{6}$, $\frac{1}{3}$ to $v_1$, $v_2$, $v_6$ respectively.  Then $\mu'(f)\ge9-4-\frac{2}{3}\times6-\frac{1}{2}-\frac{1}{6}-\frac{1}{3}=0$. If both of them are light $3$-vertices, then $f$ is a special $9$-face which contradicts Lemma~\ref{special}. The other possibility is that $v_6 \in D$ in which case $v_6$ does not receive charge from $f$. In this case, $\mu'(f)\ge9-4-\frac{2}{3}\times6-\frac{1}{2}\times 2=0$. If $s=7$, then  the seven bad vertices must be divided into two parts, one consisting of consecutive 3 bad 3-vertices; the other consisting of consecutive 4 bad 3-vertices by the other vertices of $f$ by Corollary~\ref{cor}.  We assume, without loss of generality, that none of $v_1$ and $v_5$ is a bad 3-vertex and each of $v_1v_9$, $v_8v_7$ $v_5v_6$ is an edge on a 3-face. By symmetry, assume that $v_1v_2$ and $v_3v_4$ are edges on two different 3-faces. In this case,  $v_5$ is incident with at least three $9^+$-faces. Since $v_1$ is a $4^+$-vertex, it is not a light 3-vertex.  Thus, $v_5$ gets nothing from $f$ and $\mu'(f)\ge9-4-\frac{2}{3}\times7-\frac{1}{3}=0$ by (R3) and (R\ref{4vertex}).
\end{proof}

\medskip

\n{\bf Proof of Theorem~\ref{main3}}. By Lemmas~\ref{VERTEXCHECK} and ~\ref{FACECHECK}, it is sufficient for us to check that the outer face $D$ has positive final charge. By (R\ref{outerface}) $D$ sends each incident vertex at most $\frac{4}{3}$. Thus, $\mu^*(D)\ge d(D)+4-\frac{4}{3}d(D)=\frac{1}{3}(12-d(D))\ge0$. On the other hand, $\mu^*(D)=0$ if and only if $d(D)=12$ and each vertex on $f$ receives $\frac{4}{3}$ from $D$  and $D$ does not get charge by (R5). Therefore, each vertex on $D$ is either a $2$-vertex or a $3$-vertex incident with an internal $8^-$-face. Since $D\not=G$, $D$ is incident with at least one $3$-vertex. In this case, $v$ is incident with a $9^+$-face $f_1$ other than $D$. Furthermore, $f_1$ is incident with at least two $3$-vertices on $D$, which receive no charge from $f_1$ by (R6). Thus, $f_1$ remains at least $d(f_1)-4-\frac{2}{3}(d(f_1)-2)=\frac{1}{3}(d(f_1)-8)\ge\frac{1}{3}$  after sending its charge to all vertices on $f_1$. By (R\ref{innerface}), $D$ gets at least $\frac{1}{3}$ from $f_1$, which implies that $D$ has a positive final charge.
\vskip 1cm

\n{\bf Acknowledgments}

\medskip

The authors  thank  Gexin Yu for the
valuable suggestions which improve the presentation.

\end{document}